\newtheorem{thm}{Theorem}[section]
\newtheorem{prop}[thm]{Proposition}
\newtheorem{lem}[thm]{Lemma}
\newtheorem{cor}[thm]{Corollary}
\newtheorem{defn}[thm]{Definition}
\newtheorem*{thm1}{Theorem 1}
\newtheorem*{thm2}{Theorem 2}
\def\nn{\mathbb{N}}
\def\zz{\mathbb{Z}}
\def\rr{\mathbb{R}}
\newcommand{\st}{\colon}
\newcommand{\iso}{\cong}
\def\yy{y_\mathtt}
\def\xx{x_\mathtt}
\begin{document}

\title{Commutators in groups of piecewise projective homeomorphisms.}

\author{Jos\'e Burillo}
\address{Departament de Matem\`atiques, UPC, C/Jordi Girona 1-3,
08034 Barcelona, Spain} \email{burillo@ma4.upc.edu}

\author{Yash Lodha}
\address{Department of Mathematics, EPFL, Lausanne, Switzerland.}\email{yash.lodha@epfl.ch}

\author{Lawrence Reeves}
\address{School of Mathematics and Statistics, University~of~Melbourne, Victoria, Australia}
\email{lreeves@unimelb.edu.au}

\thanks{The first author's research is supported by MICINN grant MTM2014-54896-P.
The second author's research is supported by an EPFL-Marie Curie grant.
The second author would like to thank Swiss Air for its hospitality during a flight on which a portion of the paper was written.}

\begin{abstract}
In \cite{Monod} Monod introduced examples of groups of piecewise projective homeomorphisms which are not amenable and which do not contain free subgroups,
and in \cite{LodhaMoore} Lodha and Moore introduced examples of finitely presented groups with the same property.
In this article we examine the normal subgroup structure of these groups.
Two important cases of our results are the groups $H$ and $G_0$.
We show that the group $H$ of piecewise projective homeomorphisms of $\rr$ has the property that $H''$ is simple and that every
proper quotient of $H$ is metabelian.
We establish simplicity of the commutator subgroup of the group $G_0$,
which admits a presentation with $3$ generators and $9$ relations.
Further, we show that every proper quotient of $G_0$ is abelian.
It follows that the normal subgroups of these groups
are in bijective correspondence with those of the abelian (or metabelian) quotient.
\end{abstract}

\maketitle

\section*{Introduction}

In \cite{Monod} Monod proved that the group $H$ of piecewise projective homeomorphisms of the real line is non-amenable and does not contain non-abelian free subgroups.
This provides a new counterexample to the so called von Neumann--Day problem \cite{vN,Day}.
In fact, Monod introduced a family of groups $H(A)$ for a subring $A$ of $\mathbb{R}$.
In the case where $A$ is strictly larger than $\zz$, they were all demonstrated to be counterexamples.
The group
$H$ is the case in which $A=\mathbb{R}$.

The subgroups $G_0$ and $G$ of $H$ were introduced by Lodha and Moore in \cite{LodhaMoore} as finitely presented counterexamples.
The groups $G_0$ and $G$ share many features with Thompson's group $F$.
They can be viewed as groups of homeomorphisms of the Cantor set of infinite binary sequences,
and as groups of homeomorphisms of the real line.
They admit small finite presentations, and symmetric infinite presentations
with a natural normal form \cite{LodhaMoore,Lodha}.
Further, they are of type $F_{\infty}$ \cite{Lodha}.
Viewed as homeomorphisms of the Cantor set, the elements can be represented by tree diagrams.


Thompson's group $F$ satisfies the property that $F'$ is simple, and every proper quotient of $F$ is abelian \cite{cfp}.
In this article we examine the normal subgroup structure, and in particular the commutator subgroup structure of $G$, $G_{0}$, and $H(A)$ for a subring $A$ of
$\mathbb{R}$, and obtain properties similar to $F$.
We prove the following.

\newcommand{\mainA}{1}
\begin{thm1}
Let $A$ be a subring of $\mathbb{R}$.
If $A$ has units other than $\pm 1$, then:

\begin{enumerate}
\item $H(A)'\neq H(A)''$.
\item $H(A)''$ is simple.
\item Every proper quotient of $H(A)$ is metabelian.
\end{enumerate}

If the only units in $A$ are $\pm 1$, then:

\begin{enumerate}
\item $H(A)'$ is simple.
\item Every proper quotient of $H(A)$ is abelian.
\item All finite index subgroups of $H(A)$ are normal in $H(A)$.
\end{enumerate}
\end{thm1}

We show the following for the finitely presented groups $G_0$ and $G$ (defined in Section \ref{sec:background}).

\newcommand{\mainB}{2}
\begin{thm2}
The group $G_0$ satisfies the following:
\begin{enumerate}
\item $G_0^{\prime}$ is simple.
\item Every proper quotient of $G_0$ is abelian.
\item All finite index subgroups of $G_0$ are normal in $G_0$.
\end{enumerate}
The group $G$ satisfies the following:
\begin{enumerate}
\item $G''\neq G'$.
\item $G''$ is simple and $G''=G_0'$.
\item Every proper quotient of $G$ is metabelian.
\end{enumerate}
\end{thm2}

One interesting feature of this article is that although the proofs of Theorems \mainA\ and \mainB\ both use a theorem of Higman,
the proofs are different in the following sense.
The arguments of the proof of Theorem \mainB\ are intrinsic to the combinatorial model developed for $G_0,G$ in \cite{LodhaMoore} using continued fractions.
The arguments of the proof of Theorem \mainA\ arise in the setting of the action of the groups $H(A)$ on the real line.

\section{Background}\label{sec:background}

Actions considered will be from the right except when we
explicitly use function notation or represent elements by matrices.
The conjugate of $g$ by $h$ is $h^{-1}g h$.
All groups under study here are subgroups of $PPSL_2(\rr)$, the group of piecewise projective homeomorphisms of $\rr\cup\{\infty\}$ preserving orientation. Namely, for each element in $PPSL_2(\rr)$ there are finitely many points $t_1,\ldots,t_n$ such that in each of the intervals $(-\infty,t_1]$, $[t_i,t_{i+1}]$ and $[t_n,\infty)$ the map is of the form $t\mapsto(at+b)/(ct+d)$ for some matrix
$$
\left(
\begin{array}{cc}
a&b\\
c&d
\end{array}
\right)
$$
with determinant one. The group $H$ is the subgroup of $PPSL_2(\rr)$ formed of those maps that stabilize infinity, and that hence give homeomorphisms of $\rr$. Observe that elements of $H$ have affine germs at $\pm\infty$, that is, in the interval $[t_n,\infty)$ the map is $(at+b)/d$ with $ad=1$, and similarly on the interval $(-\infty,t_1]$. Given a subring $A$ of $\rr$, we denote by $P_A$ the set of fixed points of hyperbolic elements of $PSL_2(A)$.
Then $H(A)$ is defined to be the subgroup of $H$ consisting of elements that are piecewise $PSL_2(A)$ with breakpoints in $P_A$. See \cite{Monod} for details on these groups.

The two Lodha--Moore groups \cite{LodhaMoore} are finitely presented subgroups of $H$ and will be denoted by $G$ and $G_0$. The group $G_0$ is the group of homeomorphisms of $\rr$ generated by the following three maps:
$$
a(t)=t+1\qquad b(t)=\left\{\begin{array}{ll}
\smallskip
t&\text{ if }t\leq 0\\
\medskip
\dfrac{t}{1-t}&\text{ if }0\leq t\leq \dfrac12\\
\medskip
\dfrac{3t-1}{t}&\text{ if }\dfrac12\leq t\leq 1\\
t+1&\text{ if }1\leq t\end{array}\right.
\qquad
c(t)=\left\{\begin{array}{ll}\smallskip\dfrac{2t}{t+1}&\text{ if }0\leq t\leq1\\
t&\text{ otherwise}\end{array}\right.
$$
As is done with Thompson's group $F$, elements will be given an interpretation in terms of maps of binary sequences and tree diagrams. A \emph{binary sequence} is a (finite or infinite) sequence of $\mathtt{0}$ and $\mathtt1$. The set of infinite binary sequences will be denoted by $2^\nn$, and the set of finite ones by $2^{<\nn}$. We will use $\mathtt{s}$ and $\mathtt{t}$ to denote finite binary sequences (with a distinctive font), and Greek letters $\xi$, $\zeta$ or $\eta$ for infinite ones. Two binary sequences can be concatenated as long as the first one is finite, such as $\mathtt{010}\xi$, $\mathtt{s01}$ or $\mathtt{s}\zeta$.

We define the binary sequence map:
$$
\begin{array}c
x:2^\nn\longrightarrow 2^\nn\\
\begin{array}l
(\mathtt{00}\xi)\cdot x=\mathtt0\xi\\
(\mathtt{01}\xi)\cdot x=\mathtt{10}\xi\\
(\mathtt{1}\xi)\cdot x=\mathtt{11}\xi
\end{array}
\end{array}
\qquad
$$
and also, recursively, the pair of mutually inverse maps
$$
\begin{array}c
y:2^\nn\longrightarrow 2^\nn\\
\begin{array}l
(\mathtt{00}\xi)\cdot y=\mathtt0(\xi\cdot y)\\
(\mathtt{01}\xi)\cdot y=\mathtt{10}(\xi\cdot y^{-1})\\
(\mathtt{1}\xi)\cdot y=\mathtt{11}(\xi\cdot y)
\end{array}
\end{array}
\qquad
\begin{array}c
y^{-1}:2^\nn\longrightarrow 2^\nn\\
\begin{array}l
(\mathtt{0}\xi)\cdot y^{-1}=\mathtt{00}(\xi\cdot y^{-1})\\
(\mathtt{10}\xi)\cdot y^{-1}=\mathtt{01}(\xi\cdot y)\\
(\mathtt{11}\xi)\cdot y^{-1}=\mathtt{1}(\xi\cdot y^{-1}).
\end{array}
\end{array}
$$
Each of these maps will give rise to a family of maps of binary sequences defined in the following way. Given a finite binary sequence $\mathtt{s}$, the map $\xx{s}$ is the identity except on the infinite sequences starting with $\mathtt{s}$, where it acts as $x$ on the tail. That is,
$$
(\mathtt{s}\xi)\cdot \xx{s}=\mathtt{s}(\xi\cdot x)
$$
and as the identity if the sequence does not start with $\mathtt{s}$. The maps $\yy{s}$ or $\yy{s}^{-1}$ are defined in an analogous way. The map $\xx{s}$ (and also similarly $\yy{s}$) also admits a partial action on the set $2^{<\nn}$ of finite binary sequences in the natural way: if a sequence extends $\mathtt{s00}$, $\mathtt{s01}$ or $\mathtt{s1}$, we have $(\mathtt{su})\cdot \xx{s}=\mathtt{s}(\mathtt{u}\cdot x)$, and $\xx{s}$ is the identity on those sequences which are incompatible with $\mathtt{s}$. The map $\xx{s}$ is not defined on the other sequences, namely, $\xx{s}$ is not defined on $\mathtt{s0}$ or on the prefixes of $\mathtt{s}$, in particular, observe that $x$ is not defined on the sequence $\mathtt 0$.

The reason for these definitions is that they represent elements of $G_0$ under an identification given by the following maps:
$$
\begin{array}c
\medskip\varphi:2^\nn\longrightarrow[0,\infty]\\
\begin{array}l
\medskip\varphi(\mathtt0\xi)=\dfrac{1}{1+\frac{1}{\varphi(\xi)}}\\
\varphi(\mathtt1\xi)=1+\varphi(\xi)
\end{array}
\end{array}
\qquad
\begin{array}c
\Phi:2^\nn\longrightarrow\rr\\
\begin{array}l
\Phi(\mathtt0\xi)=-\varphi(\tilde\xi)\\
\Phi(\mathtt1\xi)=\varphi(\xi)
\end{array}
\end{array}
$$
where $\tilde\xi$ is the sequence obtained from $\xi$ by replacing all symbols $\mathtt0$ by $\mathtt1$ and viceversa. Under these definitions, the maps $a$, $b$ and $c$ are represented by the binary sequence maps $x$, $x_{\mathtt1}$ and $y_{\mathtt{10}}$, respectively. We have the following result (Proposition 3.1 in \cite{LodhaMoore}):

\begin{prop} For all $\xi$ in $2^\nn$ we have
$$
a(\Phi(\xi))=\Phi(x(\xi))\qquad b(\Phi(\xi))=\Phi(x_{\mathtt1}(\xi))\qquad c(\Phi(\xi))=\Phi(y_{\mathtt{10}}(\xi))
$$
\end{prop}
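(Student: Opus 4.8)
The plan is to verify the three identities directly, case by case: for each of $a$, $b$, $c$ one splits $2^{\nn}$ according to the short initial segments appearing in the defining clauses of $x$, $x_{\mathtt 1}$, $y_{\mathtt{10}}$, and in each resulting case matches the two sides as piecewise-M\"obius maps of $\rr$. Two preliminary observations make this painless. First, rewriting the recursion for $\varphi$ as $\varphi(\mathtt 0\xi)=\varphi(\xi)/(1+\varphi(\xi))$ and using $\varphi\geq 0$, one obtains by applying the recursion once or twice the inclusions
\[
\varphi([\mathtt 0])\subseteq[0,1],\qquad \varphi([\mathtt 1])\subseteq[1,\infty],\qquad \varphi([\mathtt{00}])\subseteq[0,\tfrac12],\qquad \varphi([\mathtt{01}])\subseteq[\tfrac12,1],
\]
where $[\mathtt s]$ denotes the cylinder of sequences beginning with $\mathtt s$; together with $\Phi(\mathtt 1\xi)=\varphi(\xi)$ these pin down, in every case below, which piece of $b$ or $c$ is active. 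Second, one establishes the reciprocal identity $\varphi(\tilde\xi)\,\varphi(\xi)=1$, equivalently $\Phi(\mathtt 0\xi)=-1/\varphi(\xi)$: putting $\psi=1/\varphi$, the recursion for $\varphi$ transforms into $\psi(\mathtt 0\xi)=1+\psi(\xi)$ and $\psi(\mathtt 1\xi)=1/(1+1/\psi(\xi))$, which is precisely the recursion for $\varphi$ with the symbols $\mathtt 0$ and $\mathtt 1$ interchanged, i.e. the recursion satisfied by $\xi\mapsto\varphi(\tilde\xi)$; so the two functions coincide.

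The identity for $a$ is then immediate from the clauses of $x$: if $\xi=\mathtt{00}\zeta$ or $\xi=\mathtt 1\zeta$ then $\Phi(x(\xi))=\Phi(\xi)+1=a(\Phi(\xi))$ follows at once from the recursions for $\varphi$ and $\Phi$, while if $\xi=\mathtt{01}\zeta$ then
\[
a(\Phi(\mathtt{01}\zeta))=1-\frac{1}{\varphi(\mathtt 1\zeta)}=1-\frac{1}{1+\varphi(\zeta)}=\frac{\varphi(\zeta)}{1+\varphi(\zeta)}=\varphi(\mathtt 0\zeta)=\Phi(\mathtt{10}\zeta)=\Phi(x(\xi)),
\]
the reciprocal identity being used only in the first equality. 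For $b=x_{\mathtt 1}$: on sequences beginning with $\mathtt 0$ the map $x_{\mathtt 1}$ is the identity, and since $\Phi(\mathtt 0\xi)=-\varphi(\tilde\xi)\leq 0$ so is $b$; on the sequences $\mathtt 1\mathtt{00}\zeta$, $\mathtt 1\mathtt{01}\zeta$, $\mathtt 1\mathtt 1\zeta$ one has $\Phi(\xi)=\varphi(\mathtt{00}\zeta)$, $\varphi(\mathtt{01}\zeta)$, $\varphi(\mathtt 1\zeta)$ respectively, lying in $[0,\tfrac12]$, $[\tfrac12,1]$, $[1,\infty]$ by the range inclusions, so the active piece of $b$ is $t/(1-t)$, $(3t-1)/t$, $t+1$, and a one-line M\"obius computation using $\varphi(\mathtt 0\eta)=\varphi(\eta)/(1+\varphi(\eta))$ identifies $b(\Phi(\xi))$ with $\Phi(x_{\mathtt 1}(\xi))$ in each case.

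For $c=y_{\mathtt{10}}$ one first needs the doubling identity $\varphi(y(\xi))=2\varphi(\xi)$ (and hence $\varphi(y^{-1}(\xi))=\tfrac12\varphi(\xi)$), obtained by running through the three recursive clauses of $y$ and $y^{-1}$ and reducing each, after clearing denominators, to the same identity for the shorter tail---for instance $\varphi(y(\mathtt 1\xi))=\varphi(\mathtt{11}y(\xi))=2+\varphi(y(\xi))=2+2\varphi(\xi)=2\varphi(\mathtt 1\xi)$. Granting this: outside the cylinder $[\mathtt{10}]$ the map $y_{\mathtt{10}}$ is the identity and $\Phi$ takes values in $(-\infty,0]\cup[1,\infty]$ (the complement of $[\mathtt{10}]$ consists of the sequences beginning with $\mathtt 0$ or with $\mathtt{11}$), where $c$ is the identity; and for $\xi=\mathtt{10}\eta$ one has $\Phi(\xi)=\varphi(\mathtt 0\eta)\in[0,1]$, so the active piece of $c$ is $2t/(t+1)$, and substituting $\varphi(\mathtt 0\eta)=\varphi(\eta)/(1+\varphi(\eta))$ and $\varphi(\mathtt 0y(\eta))=\varphi(y(\eta))/(1+\varphi(y(\eta)))$ reduces the required equality $c(\Phi(\xi))=\Phi(\mathtt{10}y(\eta))$ precisely to $\varphi(y(\eta))=2\varphi(\eta)$.

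The only genuinely non-routine point is the justification of the reciprocal and doubling identities (and, pedantically, the range inclusions): the recursions for $\varphi$ and $y$ range over infinite sequences, so one cannot literally induct on length. I expect to handle this by recording that $\varphi$ is continuous on $2^{\nn}$ in the product topology and is the unique continuous function satisfying its defining relation---on eventually periodic sequences all the relevant values are honest limits of continued-fraction convergents---and then noting that in each identity both sides are continuous in $\xi$ while the recursive clauses force agreement on the dense set of eventually periodic sequences. Everything else is pure bookkeeping: tracking which clause of $x$, $x_{\mathtt 1}$, $y$, $y_{\mathtt{10}}$ and which piece of $a$, $b$, $c$ is in force in each subcase, together with a single M\"obius-transformation computation there.
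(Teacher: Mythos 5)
Your verification is correct, and all the Möbius computations check out (I re-derived each case). Note that the paper itself does not prove this proposition at all: it is quoted as Proposition 3.1 of Lodha--Moore and the reader is referred there for the proof, so there is no in-paper argument to compare against; your case-by-case check is essentially the computation one finds in that reference. The one place requiring genuine care is exactly the one you flag: the identities $\varphi(\tilde\xi)\,\varphi(\xi)=1$ and $\varphi(y(\xi))=2\varphi(\xi)$ cannot be proved by induction on length, and your proposed fix is the right one, though I would phrase it as a uniqueness statement rather than a density statement. Since $\varphi$ maps the cylinder of sequences extending $\mathtt{s}$ into an interval whose diameter tends to $0$ as $|\mathtt{s}|\to\infty$ (continued-fraction convergents), any two continuous functions $2^\nn\to[0,\infty]$ satisfying the same one-step recursion coincide; applying this to $\psi=1/\varphi$ versus $\xi\mapsto\varphi(\tilde\xi)$, and to the coupled pair $\bigl(\varphi\circ y,\ \varphi\circ y^{-1}\bigr)$ versus $\bigl(2\varphi,\ \tfrac12\varphi\bigr)$ (the pair is needed because the clause $y(\mathtt{01}\xi)=\mathtt{10}y^{-1}(\xi)$ intertwines $y$ with $y^{-1}$), gives both identities cleanly. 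The appeal to eventually periodic sequences is dispensable and slightly awkward, since the recursion does not terminate on such sequences either. With that adjustment the proof is complete.
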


For all details and proofs, see \cite{LodhaMoore}. Hence, we can consider that the group $G_0$ is the group of maps of $2^\nn$ generated by $x,x_{\mathtt1},y_{\mathtt{10}}$.


The group $G$ is defined to be the group generated by all $\xx{s}$ and $\yy{s}$. The group $G_0$ is generated by all $\xx{s}$ and by all those $\yy{s}$ where $\mathtt{s}$ is not constant, that is, $\mathtt{s}$ is not $\mathtt{0}^n$ nor $\mathtt{1}^n$. We have a series of relations which are satisfied by these generators:

\begin{enumerate}
\item $\xx{s}^2=\xx{s0}\xx{s}\xx{s1}$,
\item if $\mathtt{t}\cdot \xx{s}$ is defined, then $\xx{t}\xx{s}=\xx{s}x_{\mathtt{t}\cdot \xx{s}}$,
\item if $\mathtt{t}\cdot \xx{s}$ is defined, then $\yy{t}\xx{s}=\xx{s}y_{\mathtt{t}\cdot \xx{s}}$,
\item if $\mathtt{s}$ and $\mathtt{t}$ are incompatible, then $\yy{s}\yy{t}=\yy{t}\yy{s}$,
\item $\yy{s}=\xx{s}\yy{s0}\yy{s10}^{-1}\yy{s11}$.
\end{enumerate}

The relations (1) and (2) are part of a known presentation for Thompson's group $F$ given by Dehornoy in \cite{D}. The key relation for the Lodha--Moore groups is the relation (5), which represents algebraically the recursive definition of $y$ given above.

Finally, the relations satisfied by these generators, and in particular the Thompson's group relations,
allow us to 
obtain finite presentations.
The group $G_0$ is generated only by $x$, $\xx{1}$ and $y_{\mathtt{10}}$ with a set of 9 relations, whereas $G$ is generated by $x$, $\xx{1}$, $y_{\mathtt0}$, $y_{\mathtt1}$ and $y_{\mathtt{10}}$.


In \cite{LodhaMoore} it is shown that every element of $G$ can be written in a standard form.
Recall that $G$ is generated by the set $X\cup Y$ where $X=\{ \xx{s} \st \mathtt{s}\in 2^{<\nn}\}$ and
$Y= \{\yy{s} \st \mathtt{s}\in 2^{<\nn}\}$.
 A word over $X\cup Y$ is said to be in \emph{standard form} if it is of the form
 $$
 hy_{\mathtt{s}_1}^{a_1}\ldots y_{\mathtt{s}_n}^{a_n}
 $$
 where $h$ is a word over $X$, $\mathtt{s}_j$ is a prefix of $\mathtt{s}_i$ only if $j\ge i$,
and the $a_i$ are arbitrary, non-zero integers.
It is shown in \cite[Lemma 5.4]{LodhaMoore} that every element of $G$ can be written in standard form.
The standard form is, however, not unique.

\section{Commutators for $G_0$}

We first study the abelianization of the group $G_0$.
The partial action of $F$ on the set of all non-constant binary sequences is transitive.
Therefore, from relation $(3)$ it follows that for all non-constant $\mathtt{s}$, $\yy{s}$ is a conjugate of $y_\mathtt{10}$.

\begin{lem}\label{lem:31}
The map $\{x,x_\mathtt{1},y_\mathtt{10}\} \to \zz^3$ given by
$$
x\mapsto(1,0,0)\qquad
x_\mathtt1\mapsto(0,1,0)\qquad
y_{\mathtt{10}}\mapsto(0,0,1)
$$
extends to a surjective homomorphism $\pi:G_0\to\zz^3$,
with kernel being the commutator subgroup $G_0'=[G_0,G_0]$.
\end{lem}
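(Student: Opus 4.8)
The plan is to verify directly that the given assignment respects a presentation of $G_0$, get surjectivity for free, and then extract the description of the kernel by a short Hopfian argument. The images $(1,0,0),(0,1,0),(0,0,1)$ generate $\zz^3$, so once $\pi$ is known to be a homomorphism it is automatically onto. For the kernel: the target is abelian, so $G_0'\subseteq\ker\pi$ with no work. Conversely $G_0$ is generated by the three elements $x,\xx{1},\yy{10}$, so $G_0^{\mathrm{ab}}:=G_0/G_0'$ is generated by their classes, giving a surjection $q\colon\zz^3\twoheadrightarrow G_0^{\mathrm{ab}}$, $e_i\mapsto$ (class of the $i$-th generator). Since $\pi$ factors as $\pi=\bar\pi\circ(G_0\to G_0^{\mathrm{ab}})$ and by construction $\bar\pi\circ q=\mathrm{id}_{\zz^3}$, and $\zz^3$ is Hopfian, the surjection $\bar\pi\circ q$ being the identity forces both $q$ and $\bar\pi$ to be isomorphisms; hence $\ker\pi=\ker(G_0\to G_0^{\mathrm{ab}})=G_0'$.

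The real task is thus to show $\pi$ is well defined, i.e.\ that the nine defining relators of $G_0$ (in the finite presentation of \cite{LodhaMoore}) map to $0$; since $\zz^3$ is abelian this just means checking that the total exponent of each of $x,\xx{1},\yy{10}$ in every relator vanishes, a finite verification. I would instead carry it out in the combinatorial model, extending the assignment to the full generating set $\{\xx{s}\}\cup\{\yy{s}\st \mathtt{s}\ \text{non-constant}\}$: send $\yy{s}\mapsto(0,0,1)$ for every non-constant $\mathtt s$ (forced, since each such $\yy{s}$ is a conjugate of $\yy{10}$ in $G_0$ by relation $(3)$, as observed above), and send $\xx{s}$ to the image of its class under the composite $F\twoheadrightarrow F^{\mathrm{ab}}=\zz^2\hookrightarrow\zz^3$ onto the first two coordinates, where $F=\langle \xx{s}\st\mathtt{s}\in 2^{<\nn}\rangle\le G_0$ is the copy of Thompson's group; recall that under this map $[\xx{s}]=0$ whenever $\mathtt s$ is non-constant, while $[x]$ and $[\xx{1}]$ form a basis of $\zz^2$. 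One then checks relations $(1)$--$(5)$: relations $(1)$--$(2)$ hold because the $\xx{s}$-part factors through $F^{\mathrm{ab}}$; $(3)$ reduces to $[\yy{t}]=[\yy{t\xx{s}}]$, true as both equal $(0,0,1)$; $(4)$ is symmetric in two $\yy{}$'s; and $(5)$, namely $\yy{s}=\xx{s}\yy{s0}\yy{s10}^{-1}\yy{s11}$, abelianizes to $(0,0,1)=[\xx{s}]+(0,0,1)-(0,0,1)+(0,0,1)$, which holds precisely because $\mathtt s$ is non-constant, so $[\xx{s}]=0$.

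The one point requiring genuine care is relation $(5)$: it encodes the recursive definition of $y$ and is the only place where the abelianization of $G_0$ departs in substance from that of $F$; the check there rests entirely on the triviality of non-constant $\xx{s}$ in $F^{\mathrm{ab}}$, which follows from the standard identities $[\xx{s}]=[\xx{s0}]+[\xx{s1}]$ and $[x_{\mathtt1^n}]=[\xx{1}]$. (As a cross-check, on $G_0\subseteq H$ the three coordinate homomorphisms are visible concretely: two of them record the integer translation parts of the affine germs of an element at $-\infty$ and at $+\infty$, and the third counts the $\yy{}$-generators with sign, a quantity that relations $(1)$--$(5)$ are immediately seen to preserve.) With $\pi$ constructed, surjectivity and the kernel computation above complete the proof.
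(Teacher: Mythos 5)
Your proposal is correct and follows essentially the same route as the paper: the paper also passes to the infinite generating set $X\cup Y_0$ with relations (1)--(5), assigns $(0,0,1)$ to every $\yy{s}$ and values $(1,0,0)$, $(1,-1,0)$, $(0,1,0)$, $(0,0,0)$ to $x$, $x_{\mathtt{0}^m}$, $x_{\mathtt{1}^m}$ and non-constant $\xx{s}$ respectively (exactly your ``factor through $F^{\mathrm{ab}}\hookrightarrow\zz^3$'' map written out), checks the five relation types, and identifies the kernel by the same argument that the three generators hit a basis of $\zz^3$ while generating $G_0/G_0'$.
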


\begin{proof}
We use the infinite presentation of $G_0$ having generating set
$S_0=X\cup Y_0$  where $X=\{ \xx{s} \st \mathtt{s}\in 2^{<\nn}\}$ and
 $Y_0=\{\yy{s} \st \mathtt{s}\in 2^{<\nn}, \text{$\mathtt{s}$ is not a constant word}\}$.
The set of relations is given by those relations of the form (1) to (5) above that involve only elements of $S_0$.
That this gives a presentation of $G_0$ is Theorem 3.3 in \cite{LodhaMoore}. One could, of course, also use the finite presentation for $G_0$ on the generating set $\{x, x_\mathtt{1}, y_\mathtt{10}\}$ (see \cite{LodhaMoore}), but we shall not do so.

Noting that each element $y_\mathtt{s}\in Y_0$ is conjugate in $G_0$ to $y_{\mathtt{10}}$, that $x_{\mathtt{0}^m}\in X$ is conjugate to $x_\mathtt{0}$, that $x_{\mathtt{1}^m}\in X$
is conjugate to $x_\mathtt{1}$ and that $x_{\mathtt{s}}\in X$ is conjugate to $\xx{10}$ when $\mathtt{s}$ is non-constant, we consider the map from $S_0 \to \zz^3$ given by
$$
y_\mathtt{s}\mapsto (0,0,1)\qquad
x\mapsto(1,0,0)\qquad
x_\mathtt{s}\mapsto
\begin{cases}
(1,-1,0) &  \text{ if $\mathtt{s}=\mathtt{0}^m$ for some $m\ge 1$}\\
(0,1,0) & \text{ if $\mathtt{s}=\mathtt{1}^m$ for some $m\ge 1$}\\
(0,0,0) & \text{ if $\mathtt{s}$ is non-constant}
\end{cases}
$$
That this map extends to a homomorphism $\pi:G_0 \to \zz^3$ can be readily seen by considering the effect on each of the relations (1) to (5).

Since $\{x, x_\mathtt{1}, y_\mathtt{10}\}$ projects to a generating set for the abelianization $G_0/G_0'$ and  the image $\{\pi(x), \pi(x_\mathtt{1}), \pi(y_\mathtt{10})\}$ is a basis for $\zz^3$ we conclude that $\pi$ induces an isomorphism from $G_0/G_0'$ to $\zz^3$.
\end{proof}

The restriction of $\pi$ to the subgroup $F$ generated by $\{x,x_\mathtt{1} \}$ gives (after a change of basis) the
abelianization map for Thompson's group $F$,
where elements are evaluated by the two germs at $\pm\infty$.
The third component of the map $\pi$ represents the total exponent for the $y$-generators in a word representing an element. Hence, we have the following result.

\begin{prop}\label{prop:commutator}
The commutator $G_0'$ contains exactly  those elements in $G_0$ which have compact support, and which have a total exponent in the $y$-generators equal to zero. \qed
\end{prop}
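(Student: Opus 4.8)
The plan is to read the statement off Lemma~\ref{lem:31}, which gives $G_0'=\ker\pi$, by interpreting each of the three coordinate functions $\pi_1,\pi_2,\pi_3\colon G_0\to\zz$ of $\pi$ separately. I will argue that $\pi_3(g)=0$ expresses exactly that the total $y$-exponent of $g$ vanishes, and that $\pi_1(g)=\pi_2(g)=0$ expresses exactly that $g$ has compact support; since $\ker\pi=\ker\pi_1\cap\ker\pi_2\cap\ker\pi_3$, combining these two facts finishes the proof.

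For the third coordinate I would argue as follows. In the infinite generating set $S_0=X\cup Y_0$ used in the proof of Lemma~\ref{lem:31}, each generator in $Y_0$ has $\pi$-image with third coordinate $1$ and each generator in $X$ has third coordinate $0$, so for any word $w$ over $S_0^{\pm1}$ representing $g$ the integer $\pi_3(g)$ equals the exponent sum of the $y$-letters of $w$. As $\pi$ is a well-defined homomorphism, this exponent sum does not depend on $w$; hence ``total exponent in the $y$-generators'' is a well-defined function $G_0\to\zz$ and it coincides with $\pi_3$. The same observation applies to the finite generating set $\{x,x_{\mathtt{1}},y_{\mathtt{10}}\}$, whose only $y$-generator is $y_{\mathtt{10}}$. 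So $\pi_3(g)=0$ if and only if $g$ has total $y$-exponent zero.

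For the first two coordinates I would pass to the action on $\rr$. Each of $a,b,c$ is a translation on the ray $[1,\infty)$ (by $1$, $1$, and $0$ respectively) and on the ray $(-\infty,0]$ (by $1$, $0$, and $0$), and a composition of translations is a translation, while any word in $a^{\pm1},b^{\pm1},c^{\pm1}$ has only finitely many breakpoints. Consequently every $g\in G_0$ agrees near $+\infty$ with a translation $t\mapsto t+\lambda_{+\infty}(g)$ and near $-\infty$ with a translation $t\mapsto t+\lambda_{-\infty}(g)$, where $\lambda_{+\infty},\lambda_{-\infty}\colon G_0\to\zz$ are group homomorphisms. Evaluating on $a,b,c$ and comparing with the definition of $\pi$ yields $\lambda_{-\infty}=\pi_1$ and $\lambda_{+\infty}=\pi_1+\pi_2$. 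Finally, since $g\in H$ has only finitely many breakpoints, $g$ is the identity on some ray $[N,\infty)$ if and only if $\lambda_{+\infty}(g)=0$, and the identity on some ray $(-\infty,-N]$ if and only if $\lambda_{-\infty}(g)=0$; hence $g$ has compact support if and only if $\lambda_{+\infty}(g)=\lambda_{-\infty}(g)=0$, which by the two identities is equivalent to $\pi_1(g)=\pi_2(g)=0$.

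Assembling the pieces, $g\in G_0'=\ker\pi$ precisely when $\pi_1(g)=\pi_2(g)=\pi_3(g)=0$, that is, precisely when $g$ has compact support and total $y$-exponent zero. The only step that warrants care, rather than a genuine obstacle, is the identification of the germs at $\pm\infty$ with the homomorphisms $\pi_1$ and $\pi_1+\pi_2$, so that the algebraic condition ``the first two coordinates vanish'' matches the dynamical condition ``$g$ is the identity near $\pm\infty$'' on the nose; once $\lambda_{\pm\infty}$ are recognised as homomorphisms this reduces to checking the three generators.
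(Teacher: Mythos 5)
Your proposal is correct and takes essentially the same route as the paper, which justifies the proposition by observing that the first two components of $\pi$ record the translation germs at $\pm\infty$ (as in the abelianization of $F$) while the third records the total $y$-exponent. You merely make explicit the details the paper leaves implicit, namely the identifications $\lambda_{-\infty}=\pi_1$ and $\lambda_{+\infty}=\pi_1+\pi_2$ on generators and the well-definedness of the $y$-exponent sum via $\pi_3$.
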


The goal of this section is to prove the first part of Theorem \mainB, which concerns $G_0$.
To proceed with the proof, we will use the following theorem, due to Higman.
Let $\Gamma$ be a group of bijections of some set $E$. For $g\in \Gamma$ define its \emph{moved set} $D(g)$ as the set of points $x\in E$ such that $g(x)\ne x$. This is analogous to the support, but since \emph{a priori} there is no topology on $E$, we do not take the closure.

\begin{thm}\label{Higman}
Suppose that for all $\alpha, \beta, \gamma\in \Gamma\setminus \{1_{\Gamma}\}$, there is a $\rho\in \Gamma$ such that the following holds:
$\gamma(\rho(S))\cap \rho(S)=\varnothing$ where $S=D(\alpha)\cup D(\beta)$.
Then $\Gamma'$ is simple.

\end{thm}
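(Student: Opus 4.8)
The plan is to reduce everything to two facts about $\Gamma$ itself and then bootstrap to $\Gamma'$. Throughout write $[x,y]=xyx^{-1}y^{-1}$, and I would begin by recording the elementary observation that if $g,h\in\Gamma$ satisfy $D(g)\cap D(h)=\varnothing$ then $g$ and $h$ commute --- more generally, an element whose moved set is disjoint from $D(g)$ commutes with $g$, and two elements with disjoint moved sets can be equal only if both are trivial. Higman's hypothesis is precisely a device for arranging such disjointness after repositioning a moved set.

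\textbf{Step 1.} Every nontrivial normal subgroup $N$ of $\Gamma$ contains $\Gamma'$. Fix $c\in N\setminus\{1\}$ and arbitrary $a,b\in\Gamma$; if $a=1$ or $b=1$ then $[a,b]=1\in N$, so assume $a,b\neq 1$ and set $S=D(a)\cup D(b)$. Apply the hypothesis to the triple $(a,b,c)$ to get $\rho\in\Gamma$ with $c(\rho S)\cap\rho S=\varnothing$, and put $a'=\rho a\rho^{-1}$, $b'=\rho b\rho^{-1}$, so that $D(a'),D(b')\subseteq\rho S$ while $D(ca'c^{-1})=c(D(a'))\subseteq c(\rho S)$ is disjoint from $\rho S$. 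Hence $ca'c^{-1}$ and its inverse commute with every element moved only within $\rho S$, in particular with $a'$, $b'$ and $a'^{-1}b'a'$. Using these commutations, a short computation gives $[[c,a'],b']=[a'^{-1},b']$, whence $[a',b']^{-1}=a'\,[[c,a'],b']\,a'^{-1}$; since $c\in N$ and $N\trianglelefteq\Gamma$, every conjugate of $c^{\pm 1}$ appearing here lies in $N$, so $[a',b']\in N$ and finally $[a,b]=\rho^{-1}[a',b']\rho\in N$. Thus $\Gamma'\le N$.

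\textbf{Step 2.} $\Gamma'$ is perfect. The subgroup $\Gamma''$ is characteristic in the normal subgroup $\Gamma'$, hence normal in $\Gamma$; if $\Gamma''\neq 1$ then Step 1 gives $\Gamma''\supseteq\Gamma'$, so $\Gamma''=\Gamma'$. And $\Gamma''=1$ is impossible (assuming $\Gamma'\neq 1$): then $\Gamma'$ is abelian, and for $c\in\Gamma'\setminus\{1\}$ the hypothesis applied to $(c,c,c)$ yields $\rho$ with $c(\rho D(c))\cap\rho D(c)=\varnothing$; the element $c'=\rho c\rho^{-1}$ lies in $\Gamma'$, so $cc'c^{-1}=c'$, yet $D(cc'c^{-1})=c(\rho D(c))$ and $D(c')=\rho D(c)$ are disjoint and nonempty, a contradiction.

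\textbf{Step 3 (simplicity).} Let $1\neq M\trianglelefteq\Gamma'$ and fix $c\in M\setminus\{1\}$. By Step 2 it suffices to show that every commutator $[\sigma,\tau]$ with $\sigma,\tau\in\Gamma'$ lies in $M$, and for this I would rerun Step 1 with $N=M$: repositioning $\sigma,\tau$ to $\sigma',\tau'$ so that $D(\sigma')\cup D(\tau')$ is disjoint from its $c$-image, one has $\sigma',\tau'\in\Gamma'$, hence they normalize $M$, so the identity $[\sigma',\tau']^{-1}=\sigma'\,[[c,\sigma'],\tau']\,\sigma'^{-1}$ places $[\sigma',\tau']$ in $M$. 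The delicate point --- which I expect to be the genuine obstacle --- is the final conjugation $[\sigma,\tau]=\rho^{-1}[\sigma',\tau']\rho$: the repositioning element $\rho$ must also normalize $M$, whereas the hypothesis only supplies it in $\Gamma$. I would resolve this by showing that $\Gamma'$, acting on $E$, again satisfies Higman's hypothesis --- so the repositioning can be carried out inside $\Gamma'$, which does normalize $M$ --- using that, by Step 1, $\Gamma'$ is the smallest nontrivial normal subgroup of $\Gamma$ and is therefore rich enough that moved sets of its elements can be pushed off their conjugates by elements of $\Gamma'$ alone. With $\rho\in\Gamma'$ the final conjugation keeps $[\sigma,\tau]$ in $M$, and we conclude $M=\Gamma'$.
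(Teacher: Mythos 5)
The paper does not prove this theorem; it cites Higman's \emph{On infinite simple permutation groups}, so there is no in-paper argument to compare against. On its own terms, your Steps 1 and 2 are correct and complete: the commutator identity $[a',b']^{-1}=a'\,[[c,a'],b']\,a'^{-1}$, valid because $ca'c^{-1}$ commutes with everything moved only inside $\rho S$, does show that every nontrivial $N\trianglelefteq\Gamma$ contains $\Gamma'$, and the $(c,c,c)$ trick correctly rules out $\Gamma''=1$. This is the standard first half of Higman's proof.

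Step 3, however, contains a genuine gap, and it sits exactly where you flagged it. Your argument shows only that $\rho[\sigma,\tau]\rho^{-1}\in M$ for some $\rho\in\Gamma$ depending on $\sigma,\tau$ and $c$; equivalently, $[\sigma,\tau]$ lies in $\rho^{-1}M\rho$, which is a possibly different normal subgroup of $\Gamma'$. To conclude $[\sigma,\tau]\in M$ you need the repositioning element to normalize $M$, i.e.\ to lie in $\Gamma'$, and the assertion that ``$\Gamma'$ again satisfies Higman's hypothesis because it is the smallest nontrivial normal subgroup and therefore rich enough'' is not an argument --- it is precisely the statement that has to be proved, and it does not follow formally from Steps 1 and 2. (Note that minimality plus perfectness plus trivial centre do not force simplicity in the abstract: $S\times S$ inside $(S\times S)\rtimes C_2$ is a minimal normal, perfect, centreless, non-simple subgroup; so any proof must genuinely re-enter the permutation hypothesis to manufacture a displacing element inside $\Gamma'$, or inside $M$ itself.) The natural attempts --- taking cores or normal closures of $M$ in $\Gamma$, or replacing $c$ by $\rho^{-1}c\rho$ --- all run into the same obstruction, so this is the real content of the simplicity statement, not a routine verification. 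As written, your proposal establishes that every nontrivial normal subgroup of $\Gamma$ contains $\Gamma'$ and that $\Gamma'$ is perfect, but not that $\Gamma'$ is simple; you must either carry out the verification that the displacement hypothesis descends to $\Gamma'$ (this is what Higman does) or find an equivalent workaround, and until then the proof is incomplete.
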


The proof can be seen in \cite{higmansimple}.

\begin{cor}\label{HigmanCor}
Let $\Gamma$ be a group of compactly supported homeomorphisms of $\mathbb{R}$ that contains $F'$.
Then $\Gamma'$ is simple.
\end{cor}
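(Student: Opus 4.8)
The plan is to verify the hypothesis of Higman's theorem (Theorem \ref{Higman}) for $\Gamma$ acting on $E=\rr$. Fix $\alpha,\beta,\gamma\in\Gamma\setminus\{1\}$ and set $S=D(\alpha)\cup D(\beta)$. Since $\alpha,\beta$ are compactly supported homeomorphisms of $\rr$, the set $S$ is bounded; let $[p,q]$ be a compact interval containing $\overline{S}$. The element $\gamma$ is also a compactly supported homeomorphism and is nontrivial, so there is a point $w$ with $\gamma(w)\neq w$; choose a small open interval $I\ni w$ with $\gamma(I)\cap I=\varnothing$ (possible since $\gamma$ is continuous and $\gamma(w)\neq w$). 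The goal is to produce $\rho\in\Gamma$ with $\rho(S)\subseteq I$, for then $\gamma(\rho(S))\cap\rho(S)\subseteq\gamma(I)\cap I=\varnothing$, as required.

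The key step is therefore: given the compact interval $[p,q]$ and the open interval $I$, find $\rho\in\Gamma$ carrying $[p,q]$ into $I$. Here I would use the hypothesis $F'\leq\Gamma$. Recall that $F'$ (viewed via the standard identification of $F$ with piecewise-linear homeomorphisms of $[0,\infty)$, or here of $\rr$ through the germ-at-$\pm\infty$ picture used in this paper) acts on the real line with the following transitivity property: $F'$ (indeed already a suitable copy of $F$) can take any compact interval strictly inside a given dyadic-type subinterval, while being the identity near $\pm\infty$. Concretely, choose a small subinterval $J\subseteq I$ with endpoints that are breakpoints of the relevant copy of $F$, and use an element of $F'$ supported in a compact region that maps $[p,q]$ into $J$; such an element exists because $F'$ acts "transitively on small intervals" in the sense that for any two bounded open intervals $U,V$ there is an element of $F'$ taking $\overline{U}$ into $V$ (this follows from the corresponding standard fact for $F$, together with the fact that a compactly supported element of $F$ that is a nontrivial translation-type map near its support lies in $F'$ once we compose to kill the two germs at infinity — but since our elements are already compactly supported, they lie in $F'$ automatically when the $F$-abelianization invariants, the germs at $\pm\infty$, both vanish). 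Thus $\rho\in F'\leq\Gamma$ with $\rho([p,q])\subseteq J\subseteq I$, and in particular $\rho(S)\subseteq\rho(\overline{S})\subseteq\rho([p,q])\subseteq I$.

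Having produced such a $\rho$, the hypothesis of Theorem \ref{Higman} is verified for the arbitrary triple $(\alpha,\beta,\gamma)$, and the theorem yields that $\Gamma'$ is simple. The main obstacle — the only place where real content enters — is the displacement step: justifying that there is an element of $\Gamma$ (equivalently, of the subgroup $F'$) that compresses an arbitrary compact interval into an arbitrary prescribed open interval. This rests entirely on the well-understood dynamics of Thompson's group $F$ and its commutator subgroup on the line (the action is "locally highly transitive" on bounded intervals, and compactly supported elements with trivial germs at both ends of the line are precisely the elements of $F'$), so the proof is a matter of quoting these facts rather than proving anything new; the remaining continuity arguments for $\gamma$ and the boundedness of $S$ are routine.
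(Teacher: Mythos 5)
Your proposal is correct and is exactly the argument the paper intends: it verifies Higman's hypothesis by noting that $S=D(\alpha)\cup D(\beta)$ is bounded, choosing an interval $I$ displaced off itself by $\gamma$, and using the high transitivity of $F'\leq\Gamma$ (whose elements are precisely the compactly supported elements of $F$) to compress $S$ into $I$. The paper's proof is a one-line appeal to this same transitivity of $F'$, so you have simply supplied the routine details.
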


This is the consequence of the high transitivity of $F'$ which ensures that the conditions of the theorem are satisfied.
We shall apply this corollary to several groups in the paper.

This corollary cannot be applied directly to $G_0$, since this group contains elements (e.g., $x$) whose support is all of $\rr$.
So we will apply this corollary to the group $G_0'$, whose elements have compact support.
We conclude that $G_0''$ is simple.
The proof of Theorem \mainB\ for $G_0$ will now be complete with the following result.

\begin{prop} $G_0'=G_0''$.
\end{prop}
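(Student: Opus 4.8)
The plan is to establish the missing inclusion $G_0'\subseteq G_0''$, equivalently that the quotient $Q:=G_0/G_0''$ is abelian. This reduction is legitimate: $G_0''=[G_0',G_0']$ is characteristic in $G_0'$ and $G_0'\trianglelefteq G_0$, so $G_0''\trianglelefteq G_0$ and $Q$ is a group, generated by the images of $x$, $\xx{1}$ and $\yy{10}$. It therefore suffices to check that these three images commute pairwise in $Q$.

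Two of the three relations come cheaply. Since $F=\langle x,\xx{1}\rangle\le G_0$ we have $F'=[F,F]\subseteq[G_0,G_0]=G_0'$, and as $F'$ is infinite and simple it is perfect, so $F'=F''=[F',F']\subseteq[G_0',G_0']=G_0''$. In particular $[x,\xx{1}]\in F'\subseteq G_0''$, so $\bar x$ and $\bar x_{\mathtt 1}$ commute in $Q$. For the other two I would use relation~(3): with $\mathtt s$ the empty word and $\mathtt t=\mathtt{01}$ it gives $\yy{01}\,x=x\,\yy{10}$, hence $x\,\yy{10}\,x^{-1}=\yy{01}$ and $[x,\yy{10}]=\yy{01}\yy{10}^{-1}$; with $\mathtt s=\mathtt 1$ and $\mathtt t=\mathtt{100}$ it gives $\yy{100}\,\xx{1}=\xx{1}\,\yy{10}$, hence $\xx{1}\,\yy{10}\,\xx{1}^{-1}=\yy{100}$ and $[\xx{1},\yy{10}]=\yy{100}\yy{10}^{-1}$. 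The whole statement is thus reduced to one claim: \emph{for every non-constant finite word $\mathtt s$ one has $\yy{s}\yy{10}^{-1}\in G_0''$}; since $\mathtt{01}$ and $\mathtt{100}$ are non-constant, this kills the two commutators above in $Q$.

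To prove the claim I would first invoke the observation made just before Lemma~\ref{lem:31}: the partial action of $F$ on non-constant words is transitive, and relation~(3) transports the $y$-generators along, so there is some $f_0\in F$ with $f_0\,\yy{10}\,f_0^{-1}=\yy{s}$. The one obstacle to finishing is that $f_0$ need not be compactly supported and so need not lie in $G_0''$; I would remove it by adjusting the germs of $f_0$ at $\pm\infty$ while leaving the conjugation intact. The generators $\xx{0}$ and $\xx{11}$ both commute with $\yy{10}$ — their supports lie in the cones of $\mathtt 0$ and $\mathtt{11}$, each incompatible with $\mathtt{10}$ — and under the homomorphism $\pi$ of Lemma~\ref{lem:31} they map to $(1,-1,0)$ and $(0,1,0)$, which generate $\pi(F)=\zz^2\times\{0\}$. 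Since $\pi(f_0)\in\zz^2\times\{0\}$, I can choose $g\in\langle\xx{0},\xx{11}\rangle$ with $\pi(g)=-\pi(f_0)$ and set $f:=f_0g$. Then $f\,\yy{10}\,f^{-1}=\yy{s}$ (because $g$ centralizes $\yy{10}$), while $\pi(f)=0$ forces $f\in\ker\pi\cap F=F'\subseteq G_0''$; hence $\yy{s}=f\,\yy{10}\,f^{-1}\equiv\yy{10}\pmod{G_0''}$, which is the claim. This yields $G_0'\subseteq G_0''$, and therefore $G_0'=G_0''$.

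The step I expect to be the real difficulty is precisely this last one — upgrading the conjugator coming from relation~(3), which a priori only lies in $F\le G_0$, to one lying in $G_0''$ — since everything else (the reduction to three commutators, and the input $F'=F''$) is routine once it is settled. One could equally well avoid the ``$Q$ is abelian'' packaging and argue from the standard form of \cite{LodhaMoore}: modulo $F'\subseteq G_0''$ every element of $G_0'$ is represented by a word $y_{\mathtt{s}_1}^{a_1}\cdots y_{\mathtt{s}_n}^{a_n}$ with each $\mathtt s_i$ non-constant and, since the element lies in $\ker\pi$, with $a_1+\cdots+a_n=0$; the claim then collapses such a word to $\yy{10}^{a_1+\cdots+a_n}=1$ in $Q$.
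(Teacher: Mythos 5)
Your proof is correct, and it takes a genuinely different route from the paper's. The paper writes an element of $G_0'$ in standard form, disposes of the $F$-part via $F'=F''$, and then runs an induction on the total $y$-exponent $|a_1|+\cdots+|a_n|$, reducing to showing $\yy{s}\yy{t}^{-1}\in G_0''$ through a three-way case analysis (consecutive, incomparable, comparable prefixes) that leans on relation (5) and on explicit commutators with elements of $F'$. You instead show directly that $G_0/G_0''$ is abelian by checking the three pairwise commutators of the finite generating set; the two nontrivial ones reduce via relation (3) to the single claim that $\yy{s}\equiv\yy{10}\pmod{G_0''}$ for every non-constant $\mathtt{s}$, which you obtain by correcting the conjugator supplied by transitivity of the partial $F$-action so that it lands in $F'\subseteq G_0''$, multiplying by an element of $\langle \xx{0},\xx{11}\rangle$, which centralizes $\yy{10}$ (disjoint supports) and surjects onto $\pi(F)=\zz^2\times\{0\}$ under $\pi$. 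All the individual steps check out: $\ker(\pi|_F)=F'$ because $\pi|_F$ is (up to a basis change) the abelianization of $F$; $\pi(\xx{0})=(1,-1,0)$ and $\pi(\xx{11})=(0,1,0)$ do generate $\zz^2\times\{0\}$; and normality of $G_0''$ in $G_0$ turns $f\yy{10}f^{-1}\yy{10}^{-1}$ with $f\in G_0''$ into an element of $G_0''$. Both arguments consume the same external inputs (perfectness of $F'$, transitivity of $F$ on non-constant words, the homomorphism $\pi$), but yours is shorter and bypasses relation (5) and the standard form entirely; the paper's version, in exchange, stays within the explicit combinatorial calculus of the $\yy{s}$-relations and exhibits concrete witnesses such as $\yy{100}\yy{10011}^{-1}=x_{\mathtt{100}}\yy{1000}\yy{10010}^{-1}$, which is in keeping with the methods of \cite{LodhaMoore} but is not needed for the conclusion.
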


\begin{proof}
Consider an element $g\in G_0'$ and write the element in standard form as $g=hz$, where $h\in F$ and
$$
z=y_{\mathtt{s}_1}^{a_1}\ldots y_{\mathtt{s}_n}^{a_n}
$$
for some binary sequences $\mathtt{s}_i$, and  with $a_1+\cdots+a_n=0$. Since $h$ has compact support, we have $h\in F'=F''\subset G_0''$. So we only need to show that $z\in G_0''$. Note that for any generator $\xx{s}$ such that $\mathtt{s}$ is a non constant sequence, $\xx{s}\in G_0''$. We will make use of this fact repeatedly in what follows.

The proof proceeds by induction on $k=|a_1|+\cdots+|a_n|$ which is clearly an even number. For the element $z$ there will be some $a_i$ positive and some negative. Since $G_0''$ is normal, we can cyclically conjugate  and assume that the word starts with a subword of the type $\yy{s}\yy{t}^{-1}$. As the starting point of the induction, just take $1\in G_0''$. We just need to prove that $\yy{s}\yy{t}^{-1}\in G_0''$ and using the induction hypothesis for the rest of the word, the proof will be complete. We have three cases.

Case (1): $\mathtt{s}$ and $\mathtt{t}$ are consecutive. This just means that the corresponding intervals in $\rr$ are adjacent, or that if $\mathtt{s}$ and $\mathtt{t}$ are leaves in a tree, they are consecutive in the natural order of the leaves.

Take the word $w=\yy{100}\yy{101}^{-1}\in G_0'$. Construct an element $f\in F'$ such that
$$
fwf^{-1}=\yy{10011}\yy{101}^{-1},
$$
which is possible because these two sequences are also consecutive and any of these can be $F'$-conjugated to any other. Now we have that $[w,f]=wfw^{-1}f^{-1}\in G_0''$, since it is the commutator of two elements in $G_0'$. But clearly
$$
[w,f]=\yy{100}\yy{10011}^{-1},
$$
Now apply relation $(5)$ to $\yy{100}$ to get
$$
[w,f]=x_\mathtt{100}\yy{1000}\yy{10010}^{-1}\yy{10011}\yy{10011}^{-1}
=x_\mathtt{100}\yy{1000}\yy{10010}^{-1}.
$$
As mentioned before, we know that $x_\mathtt{100}\in F'\subset F''\subset G_0''$, so this implies that $\yy{1000}\yy{10010}^{-1}\in G_0''$, and these are two consecutive binary sequences. Hence, by conjugation, any $\yy{s}\yy{t}^{-1}$ with consecutive binary sequences is in $G_0''$.

Case (2): $\mathtt{s}$ and $\mathtt{t}$ are not consecutive and also not comparable. Assume $\mathtt{s}<\mathtt{t}$, as the other case reduces to this by inverting the element. In that case, just write
$$
\yy{s}\yy{t}^{-1}=\yy{s}y_{\mathtt{s}_1}^{-1}y_{\mathtt{s}_1}y_{\mathtt{s}_2}^{-1}y_{\mathtt{s}_2}\ldots y_{\mathtt{s}_m}^{-1}y_{\mathtt{s}_m}\yy{t}^{-1}
$$
such that the pairs
$$
\yy{s}y_{\mathtt{s}_1}^{-1}\qquad y_{\mathtt{s}_i}y_{\mathtt{s}_{i+1}}^{-1}\qquad y_{\mathtt{s}_m}\yy{t}^{-1}
$$
lie in the previous case.

Case (3): $\mathtt{s}$ and $\mathtt{t}$ are comparable, so assume $\mathtt{t}=\mathtt{su}$. The case $\mathtt{s}=\mathtt{tu}$ reduces to this by taking an inverse. We apply relation $(5)$ to $\yy{s}$ again and find pairs which now correspond to cases (1) or (2). Cases $y_{\mathtt{s}_1}^{-1}y_{\mathtt{s}_2}$ are cyclically permuted to $y_{\mathtt{s}_2}y_{\mathtt{s}_1}^{-1}$. We distinguish all four easy cases for clarity:
    \begin{itemize}
\item $\mathtt{u}=\mathtt{0}$ or $\mathtt{u}=\mathtt{11}$.
Since $\yy{s}\yy{su}^{-1}=x_\mathtt{s}\yy{s0}\yy{s10}^{-1}\yy{s11}\yy{su}^{-1}$,
 $\yy{su}^{-1}$ cancels with one of the results of relation $(5)$ applied to $\yy{s}$.
 We are left with the product of an $x$-generator (in $G_0''$) with a word that falls in case (1).
\item $\mathtt{u}=\mathtt1$. Applying relation $(5)$ we obtain
$$
\xx{s}\yy{s0}\yy{s10}^{-1}\yy{s11}\yy{s1}^{-1}
$$
and we apply case (1) to the pairs
$$
\yy{s0}\yy{s1}^{-1}\qquad \yy{s10}^{-1}\yy{s11}.
$$
\item $\mathtt{u}=\mathtt{10}$. Using relations $(4),(5)$ we obtain:
$$
\xx{s}\yy{s0}\yy{s10}^{-2}\yy{s11}
$$
and we just need to apply case (1) to $\yy{s0}\yy{s10}^{-1}$ and $\yy{s10}^{-1}\yy{s11}$.
\item $\mathtt{u}\neq \mathtt0,\mathtt1, \mathtt{10},\mathtt{11}$. Using relation $(5)$ we obtain
$$
\xx{s}\yy{s0}\yy{s10}^{-1}\yy{s11}\yy{su}^{-1}.
$$
It suffices to show that $\yy{s0}\yy{s10}^{-1}\yy{s11}\yy{su}^{-1}\in G_0''$.
If $\mathtt{u}$ begins with a $\mathtt1$,
by cyclic conjugation we obtain $(\yy{su}^{-1}\yy{s0})(\yy{s10}^{-1}\yy{s11})$.
The word $\yy{su}^{-1}\yy{s0}$ falls in cases (1) or (2) and the word $\yy{s10}^{-1}\yy{s11}$ falls in (1),
so we are done.
For the case where $\mathtt{u}$ begins with a $\mathtt0$ we express the word as a product
of $\yy{s0}\yy{s10}^{-1}$ and $\yy{s11}\yy{su}^{-1}$,
both words fall in previous case.


\end{itemize}
\end{proof}

Our main theorem for $G_0$ has some important corollaries.

\begin{cor}\label{finiteindex}
 The finite-index subgroups of $G_0$ are in bijection with the finite-index subgroups of $\zz^3$. Every subgroup $H$ of finite index is normal, and $H'=G_0'$.
\end{cor}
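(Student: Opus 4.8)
The plan is to prove the key assertion that every finite-index subgroup $H$ of $G_0$ contains the commutator subgroup $G_0'$; once this is established, the statement follows formally from the lattice isomorphism theorem together with the identification $G_0/G_0' \cong \zz^3$ of Lemma~\ref{lem:31} and the equality $G_0' = G_0''$ proved above.

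To prove $G_0' \subseteq H$, I would pass to the normal core $N = \bigcap_{g \in G_0} g H g^{-1}$, which is the kernel of the permutation action of $G_0$ on the finite coset space $G_0/H$ and hence a finite-index normal subgroup of $G_0$ contained in $H$. Since $N$ and $G_0'$ are both normal in $G_0$, the intersection $N \cap G_0'$ is normal in $G_0$, and in particular normal in $G_0'$. As $G_0'$ is simple (the first part of Theorem~\mainB\ for $G_0$), either $N \cap G_0' = G_0'$ or $N \cap G_0' = \{1_{G_0}\}$. The latter is impossible: if $N \cap G_0' = \{1_{G_0}\}$, then the natural map $G_0' \to G_0/N$ is injective, so $G_0'$ embeds into the finite group $G_0/N$; but $G_0'$ is infinite, as it contains, for instance, all $\xx{s}$ with $\mathtt{s}$ non-constant. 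Hence $G_0' \subseteq N \subseteq H$. (Alternatively, $N \cap G_0' = \{1_{G_0}\}$ would force $N$, and therefore $G_0$, to be virtually abelian, contradicting the non-amenability of $G_0$.)

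It now follows that $K \mapsto \pi(K)$ restricts to a bijection between the finite-index subgroups of $G_0$ — all of which contain $G_0'$ by the previous paragraph — and the finite-index subgroups of $\zz^3$, the inverse sending a finite-index subgroup of $\zz^3$ to its $\pi$-preimage, which has finite index in $G_0$. Every subgroup of the abelian group $\zz^3$ is normal, so its $\pi$-preimage is normal in $G_0$; thus every finite-index subgroup $H$ of $G_0$ is normal in $G_0$. Finally, from $G_0' \subseteq H \subseteq G_0$ we obtain $H' \subseteq [G_0,G_0] = G_0'$ and $H' \supseteq [G_0',G_0'] = G_0'' = G_0'$, whence $H' = G_0'$.

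The only step that is not purely formal is the exclusion of the case $N \cap G_0' = \{1_{G_0}\}$, which is where the simplicity of $G_0'$ together with the fact that $G_0$ is not virtually abelian (equivalently, that $G_0'$ is infinite) enters; everything else is bookkeeping with the lattice isomorphism theorem and the already-established structure of the abelianization.
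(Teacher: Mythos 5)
Your proposal is correct and follows essentially the same route as the paper: pass to the normal core, intersect with the simple infinite group $G_0'$ to conclude $G_0'\subseteq H$, and then read everything off the abelianization $G_0/G_0'\cong\zz^3$. The only (harmless) variation is in the last step, where you deduce $G_0'\subseteq H'$ from $H'\supseteq[G_0',G_0']=G_0''=G_0'$, while the paper argues via $H'$ being characteristic in $H$ and hence normal in $G_0$; both are valid.
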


{\it Proof.} If $H$ is not normal, then take the intersection $K$ of all its conjugates, which is now normal. Consider $K\cap G_0'$. This is a finite-index normal subgroup of $G_0'$, and since this group is simple and infinite, it has to be that $K\cap G_0'=G_0'$ and hence $G_0'\subset K\subset H$. Since every finite-index subgroup contains $G_0'$, now all of them correspond to those of the abelianization map, and hence they are all normal. The last assertion is true because $H'\subseteq G_0'$, and since $H'$ is characteristic in $H$ and hence normal in $G_0$, we have that $G_0'\subseteq H'$. \qed

For our next corollary we will need the following fact.

\begin{prop}\label{centre}
The center of $G_0$ is trivial.
\end{prop}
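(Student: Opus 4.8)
The plan is to show that any central element $g$ must act trivially on the real line, hence be the identity. The natural first step is to exploit the large supply of ``local'' generators: for every non-constant finite binary sequence $\mathtt{s}$, the elements $\xx{s}$ and (for non-constant $\mathtt{s}$) $\yy{s}$ lie in $G_0$, and via the maps $\varphi,\Phi$ they correspond to homeomorphisms of $\rr$ supported on a small interval $I_{\mathtt{s}}=\Phi(\{\mathtt{s}\xi:\xi\in 2^\nn\})$, with these intervals forming a basis of arbitrarily small neighbourhoods around every point of a dense subset of $\rr$. If $g$ is central then $g$ commutes with each such $\xx{s}$, and conjugation by $g$ sends $\xx{s}$ to a generator supported on $g(I_{\mathtt{s}})$; since $g\xx{s}g^{-1}=\xx{s}$ has support exactly $\overline{I_{\mathtt{s}}}$, we get $g(\overline{I_{\mathtt{s}}})=\overline{I_{\mathtt{s}}}$ for every non-constant $\mathtt{s}$.

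Next I would push this rigidity down to points. Intersecting nested towers $\overline{I_{\mathtt{s}}}\supset\overline{I_{\mathtt{s}\mathtt{t}_1}}\supset\cdots$ whose diameters shrink to $0$ shows that $g$ fixes every point of $\rr$ that is of the form $\Phi(\xi)$ for $\xi$ eventually constant (these are exactly the images of the endpoints of the dyadic-type intervals $I_{\mathtt{s}}$), and this set is dense in $\rr$. A continuous map fixing a dense set is the identity, so $g=\mathrm{id}$ as a homeomorphism of $\rr$, hence $g=1$ in $G_0$. An alternative, purely combinatorial route avoids topology: if $g$ commutes with $\xx{s}$ for all non-constant $\mathtt{s}$, then conjugation of the relation $\xx{s}=g\xx{s}g^{-1}$ together with relation (2) forces $g$ to preserve the prefix structure on $2^{<\nn}$ in a way that only the identity can; one then checks that an element of $G_0$ acting as the identity on all of $2^\nn$ is trivial, which is immediate since $G_0$ is by definition a \emph{group of maps} of $2^\nn$.

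The step I expect to be the main obstacle is making precise that the intervals $I_{\mathtt{s}}$ for non-constant $\mathtt{s}$ actually accumulate densely \emph{and} that the conjugation identity $g\,\xx{s}\,g^{-1}$ is supported exactly on $g(\overline{I_{\mathtt{s}}})$: one must be slightly careful because $g$ is only piecewise-$PSL_2$ and could a priori permute a finite collection of the $I_{\mathtt{s}}$ among themselves rather than fixing each individually. This is handled by using inclusions: if $\mathtt{t}$ properly extends $\mathtt{s}$ then $I_{\mathtt{t}}\subsetneq I_{\mathtt{s}}$, and $g$ must respect these proper inclusions, which rules out any nontrivial permutation and pins down $g(I_{\mathtt{s}})=I_{\mathtt{s}}$ for each individual $\mathtt{s}$. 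Once that is in hand the density and continuity argument closes the proof. I would also remark that the same argument shows the center of $G$ is trivial, since $G\supset G_0$ and the relevant generators already lie in $G_0$.
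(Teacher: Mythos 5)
Your argument is correct, but it takes a genuinely different route from the paper. You exploit the abundance of generators with small support: a central $g$ satisfies $g\,\xx{s}\,g^{-1}=\xx{s}$, hence $g$ preserves the moved set of each $\xx{s}$ individually, so (being an orientation-preserving homeomorphism) it fixes the endpoints of every interval $\overline{I_{\mathtt{s}}}$; these Farey-type endpoints are dense, so $g=\mathrm{id}$. Two small remarks: the ``permutation of intervals'' worry you flag is actually a non-issue, since the identity $g h g^{-1}=h$ holds for each generator $h=\xx{s}$ separately and immediately gives $g(D(h))=D(h)$ for that individual $h$ --- no inclusion argument is needed; and the moved set of $\xx{s}$ is the open interval (the two endpoints, images of the constant tails, are fixed), so one should pass to closures as you do. The paper argues quite differently: it uses only that $g$ commutes with the translation $a(t)=t+1$. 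Since every element of $H$ is affine near infinity, translating any non-affine locus of $g$ out to infinity forces $g$ to be piecewise affine; translation-invariance of the (finite) breakpoint set forces it to be empty, so $g$ is an affine map commuting with integer translations, hence itself a translation, which is then ruled out because $b$ and $c$ do not commute with nontrivial translations. The paper's argument buys generality within the piecewise-projective setting --- it applies verbatim to any group containing a translation and a non-translation, and is reused for $H(A)$ and $H(A)'$, where one does not have such a convenient basis of compactly supported generators. Your argument is the classical ``supports form a basis, so centralizers are trivial'' mechanism, which is cleaner for $G_0$ itself and, as you note, applies equally to $G$.
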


{\it Proof.}
Let $g\in G_0$ be in the center of $G_0$. In particular,
$g$ commutes with integer translations.

Let $I$ be an interval on which the action of $g$ is not affine.
Now the action of any piecewise projective homeomorphism near infinity is affine.
We conjugate $g$ by $x+n$ for $n\in \mathbb{N}$ to obtain a map $g^{\prime}$
which is not affine on the interval $n+I$.
By our hypothesis $g=g^{\prime}$, so it follows that $g$ is in fact piecewise affine.

Moreover, by our hypothesis it follows that the set of breakpoints of our piecewise affine
$g$ is invariant under translation, hence empty.
So $g$ is in fact an affine map.
The only affine maps that commute with integer translations are themselves translations,
so $g$ is of the form $x+t$ for $t\in \mathbb{R}$.
Now our lemma follows from the fact that $b,c$ do not commute with $x+t$
for $t\neq 0$.
\qed

We remark that the above argument is quite general. Given any group of piecewise projective homeomorphisms that contains both a translation and a non translation, then the center of the group is trivial.

So we have now the following.

\begin{cor} Every proper quotient of $G_0$ is abelian.
\end{cor}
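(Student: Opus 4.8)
The plan is to show that every nontrivial normal subgroup $N$ of $G_0$ contains the commutator subgroup $G_0'$. Once this is established, any proper quotient $G_0/N$ is a quotient of $G_0/G_0' \cong \zz^3$ by Lemma~\ref{lem:31}, and is therefore abelian.

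To prove the claim, let $N$ be a nontrivial normal subgroup of $G_0$ and consider $N \cap G_0'$. Since $N$ is normal in $G_0$, so is $N\cap G_0'$, and in particular $N \cap G_0'$ is a normal subgroup of $G_0'$. We have shown that $G_0'$ is simple (using Higman's criterion, Theorem~\ref{Higman}, together with the identity $G_0' = G_0''$), so either $N \cap G_0' = G_0'$ or $N \cap G_0' = \{1\}$. In the first case $G_0' \subseteq N$ and we are done. So suppose $N \cap G_0' = \{1\}$. Then $[N, G_0] \subseteq N$ because $N$ is normal, while $[N, G_0] \subseteq [G_0, G_0] = G_0'$ trivially; hence $[N, G_0] \subseteq N \cap G_0' = \{1\}$. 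This says that $N$ is contained in the center of $G_0$, which is trivial by Proposition~\ref{centre}, contradicting $N \neq \{1\}$. Therefore $N \cap G_0' = G_0'$, i.e.\ $G_0' \subseteq N$, as required.

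I do not anticipate a real obstacle here: the substantive inputs — simplicity of $G_0'$ and triviality of $Z(G_0)$ — are already in place, and what remains is a short, standard piece of group theory. The only point needing a little care is the bookkeeping in the second case, namely that combining normality of $N$ with the commutator identity $[G_0,G_0] = G_0'$ forces $N$ into the center; this, and the (standard) convention that a proper quotient is one by a nontrivial normal subgroup, are what make the contradiction meaningful.
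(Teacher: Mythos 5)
Your proof is correct and uses exactly the same two ingredients as the paper's argument, namely the simplicity of $G_0'$ and the triviality of the center of $G_0$; the paper just phrases it by picking a nontrivial $x$ in the kernel and a $y$ with $[x,y]\neq 1$ rather than by your case split on $N\cap G_0'$. This is essentially the same proof.
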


{\it Proof.} Let $p:G_0\longrightarrow Q$ be a proper quotient map, and let $K=\ker p$. Since the quotient is not $G_0$, there exists $x\in K$ with $x\neq 1$. Since the center is trivial, then there exists $y\in G_0$ such that $[x,y]\neq 1$. But then $[x,y]\in K\cap G_0'$, which is a normal subgroup of $G_0'$,
so it follows that $G_0'\subset K$ and $Q$ is abelian.\qed

\section{Commutators for $G$}

In this section we consider the commutator subgroups $G'$ and $G''$.
Recall that $G$ is generated by the set $\{x,\xx{1},\yy{10},\yy{0},\yy{1} \}$, that
for all $n\in\nn$,  $y_{\mathtt{0}^n}$ is a conjugate of $\yy{0}$ and $y_{\mathtt{1}^n}$ is a conjugate of $\yy{1}$ and
that for all non-constant $\mathtt{s}$, $\yy{s}$ is a conjugate of $\yy{10}$.

From the relations of the form $\yy{s}=\xx{s}\yy{s0}\yy{s10}^{-1}\yy{s11}$ we see that, unlike the situation for $G_0$,
the elements $x$ and $\xx{1}$ lie in the kernel of the abelianization map.
In more detail, we have the following relation in $G$:
$$
\yy{1}=\xx{1}\yy{10}\yy{110}^{-1}\yy{111}
$$
which, combined with
$$
\yy{111}=x^{-2}\yy{1}x^{2}\qquad
\yy{110}=x^{-1}\yy{10}x
$$
shows that $\xx{1}$ is in the kernel of the abelianization.
Similarly $\xx{0}$ is in the kernel of the abelianization because of the relation:
$$
\yy{0}=\xx{0}\yy{00}\yy{010}^{-1}\yy{011},
$$
and observing that, in an analogous way as above, $\yy{00}$ is a conjugate of $\yy0$ by an element of $F$, and similarly, $\yy{010}$ and $\yy{011}$ are also conjugate by an element of $F$. Hence when we abelianize this relation and simplify, we obtain just $\xx0=1$.
That $x$ is also in the kernel then follows from the relation
$$
\xx{0}=x^2\xx{1}^{-1}x^{-1}
$$
which is a consequence of relation (1).
Since $x$ and $\xx{1}$ together generate $F$, the whole of $F$ lies in $G'$.

We obtain the following.

\begin{lem}
The map given by:
$$
\begin{array}{c}

x\mapsto (0,0,0) \qquad
\xx{1}\mapsto (0,0,0)\qquad
\yy{10}\mapsto (1,0,0)\qquad
\yy{0}\mapsto (0,1,0)\qquad
\yy{1}\mapsto (0,0,1).
\end{array}
$$
extends to a surjective homomorphism $G\to \zz^3$,
and its kernel is exactly the commutator subgroup $G'$.
\end{lem}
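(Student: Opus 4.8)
The plan is to follow the proof of Lemma~\ref{lem:31}, now working with the infinite presentation of $G$ on the generating set $X\cup Y$ with defining relations $(1)$--$(5)$ (Theorem~3.3 in \cite{LodhaMoore}). First I would define a set map $\pi$ on $X\cup Y$ by $\pi(\xx{s})=(0,0,0)$ for every $\mathtt{s}$, $\pi(y_{\mathtt{0}^n})=(0,1,0)$ and $\pi(y_{\mathtt{1}^n})=(0,0,1)$ for $n\ge 1$, $\pi(\yy{s})=(1,0,0)$ for every non-constant $\mathtt{s}$, and $\pi(y_\emptyset)=(-1,1,1)$ for the empty word; one then checks that this assignment kills every relator, hence extends to a homomorphism $\pi\colon G\to\zz^3$.

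Relations $(1)$ and $(2)$ involve only $X$, so both sides map to $(0,0,0)$, and relation $(4)$ is respected because $\zz^3$ is abelian. The two relations needing attention are $(3)$ and $(5)$. For $(3)$, $\yy{t}\xx{s}=\xx{s}y_{\mathtt{t}\xx{s}}$, I would use that the partial action of $F$ on $2^{<\nn}$ sends constant-$\mathtt{0}$ words to constant-$\mathtt{0}$ words, constant-$\mathtt{1}$ words to constant-$\mathtt{1}$ words, non-constant words to non-constant words, and the empty word to itself. This can be read directly off the formula for $x$; alternatively it holds because the associated homeomorphisms of $\rr$ fix $\pm\infty$, hence cannot change whether the interval $\Phi(\{\mathtt{t}\xi:\xi\in 2^\nn\})$ is bounded, unbounded above, or unbounded below. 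Thus $\pi(\yy{t})=\pi(y_{\mathtt{t}\xx{s}})$ and $(3)$ holds. For $(5)$, $\yy{s}=\xx{s}\yy{s0}\yy{s10}^{-1}\yy{s11}$, I would split into the cases $\mathtt{s}$ non-constant, $\mathtt{s}=\mathtt{0}^n$, $\mathtt{s}=\mathtt{1}^n$, and $\mathtt{s}$ empty; in each, the types of $\mathtt{s0},\mathtt{s10},\mathtt{s11}$ are immediate and the right-hand side evaluates under $\pi$ to $\pi(\yy{s})$. In particular the empty-word case gives $(0,1,0)-(1,0,0)+(0,0,1)=(-1,1,1)$, which is exactly why $\pi(y_\emptyset)$ was assigned that value (so the assignment is consistent, not an extra hypothesis).

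Surjectivity is clear since $\pi(\yy{10}),\pi(\yy{0}),\pi(\yy{1})$ is the standard basis of $\zz^3$. Since $\zz^3$ is abelian, $G'\subseteq\ker\pi$. For the reverse inclusion, recall that the relations displayed just before the lemma already show that $x$ and $\xx{1}$, hence all of $F=\langle x,\xx{1}\rangle$, lie in $G'$; therefore $G/G'$ is generated by the images of $\yy{10},\yy{0},\yy{1}$. As $\pi$ factors through $G/G'$ and sends this generating triple to a basis of $\zz^3$, the induced map $\bar\pi\colon G/G'\to\zz^3$ has a two-sided inverse (send the standard basis back to the triple), so $\bar\pi$ is an isomorphism and $\ker\pi=G'$.

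The main obstacle is checking that $\pi$ respects relation $(3)$: $(1),(2),(4)$ are formal and $(5)$ is a finite computation, but $(3)$ genuinely requires knowing that no element of $F$, acting partially on finite binary words, can convert a constant word into a non-constant one or vice versa --- for which the cleanest argument seems to be that $F\subseteq H$ acts on $\rr$ by homeomorphisms fixing $\pm\infty$.
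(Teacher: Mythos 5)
Your proposal is correct and follows essentially the same route as the paper: extend the assignment to the full infinite generating set $X\cup Y$ (with $y_\emptyset\mapsto(-1,1,1)$ and the same values on constant and non-constant words), verify relations $(1)$--$(5)$, and then use that $F\subseteq G'$ to see that the images of $\yy{10},\yy{0},\yy{1}$ generate $G/G'$ and map to a basis of $\zz^3$. The only difference is that you spell out the verification of relation $(3)$ (that the partial $F$-action preserves the type of a finite word), which the paper leaves as ``readily seen''; your check is correct.
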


\begin{proof}
As in the proof of Lemma \ref{lem:31}
we consider the infinite presentation and  show that the given map extends to a homomorphism by first noting the extension to the infinite generating set.
The infinite generating set for $G$ we consider is
$S=X\cup Y$  where $X=\{ \xx{s} \st \mathtt{s}\in 2^{<\nn}\}$ and
 $Y=\{\yy{s} \st s\in 2^{<\nn}\}$.
The set of relations is given by  (1) to (5) above.
That this gives a presentation of $G$ is Thereom 3.3 in \cite{LodhaMoore}.

Consider the map from $S \to \zz^3$ given by
$$
x_\mathtt{s}\mapsto (0,0,0)\qquad
y\mapsto(-1,1,1)\qquad
y_\mathtt{s}\mapsto
\begin{cases}
(0,1,0) &  \text{ if $\mathtt{s}=\mathtt{0}^m$ for some $m\ge 1$}\\
(0,0,1) & \text{ if $\mathtt{s}=\mathtt{1}^m$ for some $m\ge 1$}\\
(1,0,0) & \text{ if $\mathtt{s}$ is non-constant}
\end{cases}
$$
That this map extends to a homomorphism $\pi:G \to \zz^3$ can be readily seen by considering the effect on each of the relations (1) to (5).

Since $\{y_\mathtt{0}, y_\mathtt{1}, y_\mathtt{10}\}$ projects to a generating set for the abelianization $G/G'$ and  the image $\{\pi(y_\mathtt{0}), \pi(y_\mathtt{1}), \pi(y_\mathtt{10})\}$ is a basis for $\zz^3$ we conclude that $\pi$ induces an isomorphism from $G/G'$ to $\zz^3$.\end{proof}


Given a word in standard form $hy_{\mathtt{s}_1}^{a_1}\ldots y_{\mathtt{s}_n}^{a_n}$, define the
\emph{left $y$-exponent sum} to be the integer given by summing the elements of
$\{a_i\st \mathtt{s}_i=\mathtt{0}^n,\ n\ge 1\}$. Similarly, define the \emph{right $y$-exponent sum} and
\emph{central $y$-exponent sum} as the sums of the sets
$\{a_i\st \mathtt{s}_i=\mathtt{1}^n,\ n\ge 1\}$ and $\{a_i\st \mathtt{s}_i \text{ is non-constant}\}$ respectively.
The above discussion established the following.

\begin{prop}
The commutator subgroup $G'$ consists precisely of those elements of $G$  that
have a standard form expression with left $y$-exponent sum, right $y$-exponent sum and central $y$-exponent sum all equal to zero.
\qed
\end{prop}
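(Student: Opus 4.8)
The plan is to obtain the proposition as an essentially immediate consequence of the abelianization homomorphism $\pi\colon G\to\zz^3$ of the preceding lemma, whose kernel is $G'$. Fix $g\in G$ and, by \cite[Lemma 5.4]{LodhaMoore}, write it in standard form $g=h\,y_{\mathtt{s}_1}^{a_1}\cdots y_{\mathtt{s}_n}^{a_n}$ with $h$ a word over $X$. I would first arrange that none of the indices $\mathtt{s}_i$ is the empty word: the prefix condition in the definition of standard form confines any occurrence of $y=y_\emptyset$ to a single terminal block $y^{a_n}$, and relation $(5)$ with empty index, namely $y=x\,\yy{0}\,\yy{10}^{-1}\,\yy{11}$, together with the relations $(1)$--$(4)$ used to push the resulting $x$-letters back into $h$ and reorder the $y$-letters, rewrites this block as a product of generators whose indices are non-empty. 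After this normalization every $y$-index that appears is non-empty.

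Next I would simply apply $\pi$ to this standard form. Since $\pi$ sends every element of $X$ to $0$ it kills $h$, so, using that $\zz^3$ is abelian,
$$
\pi(g)=\sum_{i=1}^{n}a_i\,\pi\bigl(y_{\mathtt{s}_i}\bigr).
$$
By the lemma, $\pi(y_{\mathtt{s}_i})$ is $(1,0,0)$ when $\mathtt{s}_i$ is non-constant, $(0,1,0)$ when $\mathtt{s}_i=\mathtt{0}^m$ with $m\ge1$, and $(0,0,1)$ when $\mathtt{s}_i=\mathtt{1}^m$ with $m\ge1$; grouping the terms accordingly, the three coordinates of $\pi(g)$ are exactly the central, left and right $y$-exponent sums of the standard form. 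Since $g\in G'$ if and only if $\pi(g)=0$, this says precisely that $g$ lies in $G'$ exactly when all three exponent sums vanish; as a byproduct it also shows that these sums depend only on $g$, not on the chosen standard form.

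Everything here is a direct translation of the abelianization lemma, so I do not expect a serious obstacle; the one point that genuinely needs attention is the normalization in the first paragraph removing $y_\emptyset$. Without it the statement would fail: for instance $g=y$ admits the standard form $y_\emptyset^{1}$, whose left, right and central $y$-exponent sums are all zero, even though $y\notin G'$. One could alternatively sidestep this entirely by adopting the convention --- implicit in \cite{LodhaMoore} --- that ``standard form'' always refers to one whose $y$-indices are non-empty, in which case the proposition follows at once from the computation of $\pi$ above.
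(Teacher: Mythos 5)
Your proof is correct and takes essentially the same route as the paper, which states this proposition with no separate argument beyond ``the above discussion established the following'': one reads off the three exponent sums as the coordinates of the image under the abelianization homomorphism $\pi\colon G\to\zz^3$ and uses $\ker\pi=G'$. Your normalization step eliminating $y_{\emptyset}$ (via relation $(5)$) addresses a genuine edge case that the paper leaves implicit, since a terminal block $y^{a}$ would contribute to none of the three sums as defined; this is a worthwhile clarification rather than a deviation in method.
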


Note that elements of $G'$ need not be compactly supported (e.g., $x$).
An element of $G'$ is compactly supported precisely when $h$ is compactly supported and
all $\mathtt{s}_i$ are non-constant. Elements of $G''$ have compact support.
In fact, we have the following.

\begin{prop}
$G''=G_0'$
\end{prop}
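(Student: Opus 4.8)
The plan is to prove the two inclusions $G_0'\subseteq G''$ and $G''\subseteq G_0'$ separately. The first is formal: since $G_0$ is a subgroup of $G$ we have $G_0'\subseteq G'$, and therefore $G_0''=[G_0',G_0']\subseteq [G',G']=G''$; combining this with the identity $G_0'=G_0''$ proved just above gives $G_0'\subseteq G''$.

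For the reverse inclusion, let $g\in G''$. Two properties of $g$ are used. First, $g$ has compact support: this was recorded above, and one sees it because the affine germs at $+\infty$ and at $-\infty$ together define a homomorphism from $G$ into $\mathrm{Aff}_+(\rr)\times\mathrm{Aff}_+(\rr)$, which is metabelian, so $G''$ lies in its kernel, and an element of $G$ whose germ at each end is trivial is the identity on a neighbourhood of that end. Second, since $g\in G''\subseteq G'$, its left, right and central $y$-exponent sums all vanish. Now write $g$ in standard form $g=h\,y_{\mathtt{s}_1}^{a_1}\cdots y_{\mathtt{s}_n}^{a_n}$. By the characterization of the compactly supported elements of $G'$ recorded before the proposition, we may choose this standard form so that every $\mathtt{s}_i$ is non-constant (and $h\in F\subseteq G_0$). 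Then every factor of $g$ lies in $G_0$, hence $g\in G_0$. Since all $\mathtt{s}_i$ are non-constant, the central $y$-exponent sum of $g$ is exactly $a_1+\cdots+a_n$, which therefore equals $0$; thus $g$ is a compactly supported element of $G_0$ with total $y$-exponent zero, and so $g\in G_0'$ by Proposition \ref{prop:commutator}. This gives $G''\subseteq G_0'$.

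The only genuinely delicate point is the appeal to the standard-form characterization: standard forms are not unique, so one must be confident that "compactly supported" for an element of $G'$ truly guarantees the \emph{existence} of a standard form with $h$ compactly supported and all subscripts non-constant, not merely a statement about some fixed standard form. Granting this (it is the content of the discussion preceding the proposition), everything else is bookkeeping with the abelianization homomorphism, the compact-support criterion for $G''$, and the already-established equality $G_0'=G_0''$.
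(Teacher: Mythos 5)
Your proof is correct and follows essentially the same route as the paper: the inclusion $G_0'=G_0''\subseteq G''$ is formal, and the reverse inclusion combines compact support of elements of $G''$, the vanishing of the $y$-exponent sums on $G'$, and Proposition~\ref{prop:commutator}. The one point you flag as delicate---that a compactly supported element of $G'$ genuinely \emph{admits} a standard form with all subscripts non-constant, standard forms being non-unique---is precisely where the paper's proof does its real work: it first produces some (not necessarily standard) word representing the element with no constant-subscript $y$-generators, and then observes that the normalization procedure of Section~5 of \cite{LodhaMoore} preserves this property; so that step should be regarded as needing this argument rather than as already settled by the assertion preceding the proposition.
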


\begin{proof}
One inclusion is clear since $G_0'=G_0''\subseteq G''$.
For the reverse inclusion recall that the action of any piecewise projective homeomorphism is affine near infinity.
The elements of $G''$ have compact support and therefore we claim they have a standard form that does not contain any elements
of the form $\yy{s}$ with $\mathtt{s}$ constant.

It is shown in \cite{Lodha} (Section $5$) that any element $g\in G$ can be represented as a normal form $g=fy_{s_1}^{t_1}...y_{s_n}^{t_n}$
with the property that it does not admit potential cancellations or potential contractions.
(These notions are defined in \cite{Lodha}, we do not recall them here since we only need a corollary of these.
Also, this is proved for the group $G_0$ but the same proof, line by line, applies to $G$.)

We use from \cite{Lodha} the notion of \emph{calculation} of a standard form on binary sequences.
This is defined for infinite sequences in Definition $3.13$ in \cite{Lodha} and the definition for finite sequences occurs
in the paragraph after Lemma $3.14$ in \cite{Lodha}.

Let $u_0$ be a finite binary sequence that contains as a prefix $s_i$ for some $1\leq i\leq n$.
Moreover, assume that for any sequence $s_j$ in the set $\{s_1,...,s_n\}$ either $s_j\subset u_0$ or $u_0\not \subseteq s_j$.
This holds for instance whenever $u_0$ is longer than all sequences in $\{s_1,...,s_n\}$.

Let $\Lambda$ be the associated calculation of $fy_{s_1}^{t_1}...y_{s_n}^{t_n}$ on $u_0$.
Since $fy_{s_1}^{t_1}...y_{s_n}^{t_n}$ does not contain potential cancellations,
$\Lambda$ does not contain potential cancellations.
So by Lemma $3.15$ in \cite{Lodha}, there is a finite binary sequence $u_1$
such that one can perform a sequence of moves on the calculation $\Lambda u_1$ to obtain a string of the form $u_2y^n$ for some $u_2\in 2^{<\mathbb{N}}$ and $n\in \mathbb{N}\setminus \{0\}$.
Here $n$ is the number of occurrences of $y^{\pm}$ in the calculation $\Lambda u_1$,
which is positive since $u_0$ contains $s_i$ as a prefix.

 The calculation of $fy_{s_1}^{t_1}...y_{s_n}^{t_n}$ on $u_0u_1 0^{2^n} 1 0^{2^n} 1...$ equals $$\Lambda u_1 0^{2^n} 1 0^{2^n} 1...$$
 By our hypothesis, upon performing moves this simplifies to
 $$u_2 y^n 0^{2^n} 1 0^{2^n} 1...$$
 and finally to $$u_2 0 1^{2^n} 0 1^{2^n} 0...$$
Hence the action of $fy_{s_1}^{t_1}...y_{s_n}^{t_n}$ on the sequence $u_0u_1 0^{2^n} 1 0^{2^n} 1...$ does not preserve tail equivalence.

In particular, the element $g$ does not preserve tail equivalence on a dense subset of $Supp(y_{s_i}^{t_i})$ for any $1\leq i\leq n$.
If $g\in G''$, it is compactly supported, and hence must preserve tail equivalence outside a compact interval.
This means that the support of each $y_{s_i}^{t_i}$ is compact.
In particular, $fy_{s_1}^{t_1}...y_{s_n}^{t_n}$ does not contain elements of the form $\yy{s}^{\pm}$ with $\mathtt{s}$ constant.

The total $y$-exponent sum of such a standard form is zero since this is true for $G'$.
It then follows from Proposition \ref{prop:commutator} that $G''\subseteq G_0'$.
\end{proof}

In particular $G''$ is simple. Note that $G''$ is strictly smaller than $G'$ since elements of $G''$ have compact support.

Recall that $x$ and $\xx{1}$ (hence any element of $F$) are in $G'$.

\begin{prop}
The group $G'/G''$ is generated by the cosets of $x$ and $\xx{1}$.
There is an isomorphism $G'/G''\to \zz^2$ given by the images of the generators:
$$
xG''\mapsto (1,0)\qquad
\xx{1}G''\mapsto (0,1).
$$
\end{prop}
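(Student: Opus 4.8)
The plan is to realise $G'/G''$ as the abelianisation of the copy of Thompson's group $F=\langle x,\xx{1}\rangle$ sitting inside $G'$. First I would record that $F'=F''$ (Thompson's $F'$ is perfect), so $F'=F''\subseteq G_0''=G_0'=G''$ and the composite $F\hookrightarrow G'\twoheadrightarrow G'/G''$ kills $F'$, inducing a homomorphism $\overline\iota\colon F/F'\to G'/G''$. This map is injective, because a compactly supported element of $F$ lies in $F'$ while every element of $G''=G_0'$ is compactly supported (Proposition~\ref{prop:commutator}), so $F\cap G''=F'$. Since $F/F'\cong\zz^2$ with $xF'\mapsto(1,0)$ and $\xx{1}F'\mapsto(0,1)$, everything reduces to proving that $\overline\iota$ is onto, i.e.
$$G'=F\cdot G'' ;$$
here $G''\trianglelefteq G$, so $F\cdot G''$ is a subgroup of $G'$.

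To establish $G'=F\cdot G''$ I would take $g\in G'$, write it in standard form $g=h\,\yy{s_1}^{a_1}\cdots\yy{s_n}^{a_n}$ with $h\in F$, and note that by the description of $G'$ in terms of $y$-exponent sums the left, right and central $y$-exponent sums all vanish. Since $h\in F$ it is enough to put $z:=\yy{s_1}^{a_1}\cdots\yy{s_n}^{a_n}$ into $F\cdot G''$. The factors with non-constant subscript are compactly supported; for a factor $y_{\mathtt{0}^m}$ I would apply relation $(5)$, $y_{\mathtt{0}^k}=x_{\mathtt{0}^k}\,y_{\mathtt{0}^{k+1}}\,y_{\mathtt{0}^k\mathtt{10}}^{-1}\,y_{\mathtt{0}^k\mathtt{11}}$, repeatedly, together with the commutation relations $(4)$, to show that $y_{\mathtt{0}^m}\,\yy{0}^{-1}$ equals a suitable power of $\xx{0}$ times a compactly supported element; that compactly supported element has zero total $y$-exponent and lies in $G_0$, hence in $G_0'=G''$. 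Thus modulo $G''$ each $y_{\mathtt{0}^m}$ is a power of $\xx{0}$ times $\yy{0}$, and symmetrically each $y_{\mathtt{1}^m}$ is a power of $\xx{1}$ times $\yy{1}$. Feeding this back into $z$ and using the vanishing of the $\mathtt{0}$-type and $\mathtt{1}$-type exponent sums (so that the surplus copies of $\yy{0}$ and $\yy{1}$ cancel and only powers of $\xx{0},\xx{1}$ are left), together with the fact that the product of the non-constant factors is a compactly supported element of $G_0$ whose total $y$-exponent equals the central $y$-exponent sum $=0$ and hence lies in $G''$, one obtains $z\in F\cdot G''$.

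It then follows that $G'/G''=F\,G''/G''\cong F/(F\cap G'')=F/F'\cong\zz^2$, with $xG''\mapsto(1,0)$ and $\xx{1}G''\mapsto(0,1)$, so in particular $G'/G''$ is generated by the cosets of $x$ and $\xx{1}$. The hard part is the identity $G'=F\cdot G''$: carrying out the reduction of the constant-subscript generators $y_{\mathtt{0}^m},y_{\mathtt{1}^m}$ modulo $F$ and $G''$ via relation $(5)$, and then seeing that the vanishing of the three $y$-exponent sums, combined with commutativity of $G'/G''$ and the fact that a compactly supported element of $G'$ already lies in $G''$, forces everything down into the subgroup generated by the classes of $x$ and $\xx{1}$.
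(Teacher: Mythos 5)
Your argument is correct and follows essentially the same route as the paper: both reduce the statement to the identity $G'=F\cdot G''$ via standard forms, the vanishing of the three $y$-exponent sums, and the equality $G''=G_0'$ with its compact-support characterisation. Two points of execution differ and are worth recording. First, you absorb $y_{\mathtt{0}^m}$ into $F\cdot\yy{0}\cdot G''$ by iterating relation $(5)$, whereas the paper substitutes the conjugation relations $y_{\mathtt{0}^{n+1}}=x^n\yy{0}x^{-n}$; the two are interchangeable, and your version has the mild advantage of making explicit the $F$-factor (a product of $x_{\mathtt{0}^k}$'s) that blocks the tempting but false congruence $y_{\mathtt{0}^m}\equiv\yy{0}\pmod{G''}$. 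Second, you identify the quotient with $\zz^2$ via $F\cap G''=F'$ and the second isomorphism theorem, while the paper invokes the germ-at-infinity homomorphism $G'\to\zz^2$; your route is more self-contained, the paper's is shorter. One caveat applies equally to your sketch and to the paper's proof: after the substitutions, the letters $\yy{0}^{\pm1}$ and $\yy{1}^{\pm1}$, which carry zero exponent sum, must still be brought together and cancelled modulo $G''$, and since these letters do not lie in $G'$ the commutativity of $G'/G''$ alone does not licence the rearrangement; one needs in addition that conjugation by $\yy{0}$ and $\yy{1}$ keeps the intervening material inside $G_0$ modulo $G''$ (equivalently, that $G_0$ is normalised by $\yy{0},\yy{1}$, which also yields the shortcut $G'\subseteq G_0$ since $\yy{0}$ and $\yy{1}$ commute). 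This is where the remaining work sits, but it is not a defect relative to the paper, which elides the same step.
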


\begin{proof}
Any element of $G'$ has $\yy{0}$ exponent sum equal to zero and $\yy{1}$ exponent sum equal to zero.
The relations
$
 y_{\mathtt0^{n+1}}=x^n\yy{0}x^{-n}
 $
and
$
y_{\mathtt1^{n+1}}=x^{-n}\yy{1}x^{n}
 $
 then show that
 any element of
 $G'/G''$ can be written as
 $$hy_{\mathtt{s}_1}^{a_1}\ldots y_{\mathtt{s}_n}^{a_n}G''
$$
 with $h\in F$, each $\mathtt{s}_i$ non-constant and the sum of the $a_i$ equal to zero.
 Hence, $y_{\mathtt{s}_1}^{a_1}\ldots y_{\mathtt{s}_n}^{a_n}\in G_0'$ by Proposition \ref{prop:commutator}.
 Therefore, as $G_0'=G''$, any element of $G'/G''$ can be written in the form $x^m\xx{1}^nG''$ with $m,n\in\zz$.

That  there is a homomorphism sending $xG''$ to $(1,0)$ and $\xx{1}G''$ to $(0,1)$ is clear from
the above discussion.
To see that this is surjective, note that there is a homomorphism $G'\to \zz^2$,
given by the germs at infinity.
This map must therefore be precisely $G'/G''$.
\end{proof}

We have seen that the derived series for $G$ is $G''\lhd G'\lhd G$ with
$G/G'\iso\zz^3$, $G'/G''\iso \zz^2$ and
$G''$ perfect.

\section{Commutators for $H(A)$}
In this section we consider the group $H(A)$, where $A$ is a subring of $\mathbb{R}$.
We observe a basic fact about $P_A$.
\begin{lem}\label{units}
Let $A$ be a subring of $\mathbb{R}$.
\begin{enumerate}
\item If $A$ has a unit $c\neq \pm1$,
then $\infty\in P_A$.
\item If the only units in $A$ are $\pm 1$,
then $\infty\notin P_A$.
\end{enumerate}
\end{lem}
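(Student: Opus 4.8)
The plan is to settle both statements by exhibiting, or analysing, a single explicit hyperbolic matrix. Recall that an element of $PSL_2(\mathbb{R})$ is hyperbolic precisely when a matrix representative has trace of absolute value strictly greater than $2$, and that the M\"obius map attached to $\begin{pmatrix} a & b\\ c& d\end{pmatrix}$ sends $\infty$ to $a/c$. For part (1), given a unit $c\in A$ with $c\neq\pm1$, I would take $M=\begin{pmatrix} c & 0\\ 0 & c^{-1}\end{pmatrix}$: its entries lie in $A$ because $c$ is a unit, and $\det M=1$, so $M$ represents an element of $PSL_2(A)$. Its trace is $c+c^{-1}$, which has absolute value strictly greater than $2$ since $c$ is a nonzero real number different from $\pm1$, so $M$ is hyperbolic. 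The associated map $t\mapsto c^{2}t$ has fixed point set $\{0,\infty\}$, and hence $\infty\in P_A$.

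For part (2), I would argue by contradiction. Assume $\infty\in P_A$, so there is a hyperbolic element of $PSL_2(A)$ fixing $\infty$, represented by $M=\begin{pmatrix} a & b\\ c& d\end{pmatrix}$ with $a,b,c,d\in A$ and $ad-bc=1$. Since the associated map sends $\infty$ to $a/c$, fixing $\infty$ forces $c=0$; then $ad=1$, so $a$ and $d$ are units of $A$, hence $a,d\in\{\pm1\}$ by hypothesis, and $ad=1$ gives $a=d$. Passing to the $PSL_2$-class we may take $a=d=1$, so $M=\begin{pmatrix} 1 & b\\ 0 & 1\end{pmatrix}$ has trace $2$, contradicting hyperbolicity. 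Therefore $\infty\notin P_A$.

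There is essentially no obstacle in either direction; the only two points that merit a word are the elementary fact that $|c+c^{-1}|>2$ for real $c\neq 0,\pm1$ (it equals $\pm(|c|+|c|^{-1})$, whose absolute value exceeds $2$ unless $|c|=1$), and the routine translation between $SL_2$-representatives and their classes in $PSL_2$.
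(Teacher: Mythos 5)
Your proof is correct and follows essentially the same route as the paper: the same diagonal matrix $\left(\begin{smallmatrix} c & 0 \\ 0 & c^{-1}\end{smallmatrix}\right)$ for part (1), and for part (2) the same observation that a hyperbolic element fixing $\infty$ must be upper triangular with a unit $\neq\pm1$ on the diagonal. You merely spell out the trace criterion for hyperbolicity, which the paper leaves implicit.
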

\begin{proof}
First we consider the case where $A$ has a unit $c\neq \pm 1$.
Consider an affine map of the form $t\to c^2t$.
Then, this map fixes $0$ and $\infty$. The corresponding matrix in $PSL_2(A)$
\[ \left( \begin{array}{cc}
c & 0 \\
0 & c^{-1}  \end{array} \right)\]
is a hyperbolic matrix which fixes $\infty$.
So it follows that $\infty\in P_A$.

Now we consider the case when the only units in $A$ are $\pm1$.
Any hyperbolic matrix that fixes $\infty$
must be the the form

\[\left( \begin{array}{cc}
u & v \\
0 & u^{-1}  \end{array} \right)\]

so that $u$ is a unit that does not equal $\pm 1$.
Since there are no such units in $A$, there are no such matrices in $PSL_2(A)$,
and so $\infty\notin P_A$.
\end{proof}

The fact that $\infty$ is such an important point and that it belongs to $P_A$ only in the case where there are nontrivial units is the reason why the proof of the theorem is split in these two cases.

So we consider first the case in which $A$ has units other than $\pm1$.
We define $H_c(A)$ as the group of compactly supported elements of $H(A)$.
Since the elements of $H(A)$  are affine near infinity, it follows that $H(A)''\subseteq H_c(A)$, because two elements of the type $y=ax+b$ have a commutator of the type $y=x+k$, and two of these commute.
Due to the high transitivity of the action of $PSL_2(\mathbb{Z})$ on the real line, a variation of Corollary \ref{HigmanCor} applies to the groups $H_c(A)$ and $H(A)''$ and therefore the groups $H_c(A)'$ and $H(A)'''$ are simple.

From our hypothesis on $A$ it is clear that $H(A)'\neq H(A)''$, since all elements of $H(A)''$ are compactly supported, whereas there are maps
in $H'(A)$ that are not compactly supported.
For instance, consider a commutator of an integer translation with a map of the form $t\mapsto p^2t$, where $p\neq \pm 1$ is a unit in $A$.
Moreover, since $H(A)''\subseteq H_c(A)$ it follows that $H(A)'''$ is a normal subgroup of $H_c(A)'$
and by the simplicity of these groups we deduce that $H_c(A)'=H(A)'''$.
To establish simplicity of $H(A)''$ it suffices to show that $H(A)''=H_c(A)'$.
Indeed it suffices to show that if $g,h\in H(A)'$,
then $[g,h]\in H_c(A)'$.

Before we show this, we first build some generic elements of $H(A)$
which will be used in the proof.

\begin{defn}
Given any positive real number $r\in A$,
there is an $x\in (0,1)$ such that $\frac{x}{1-x}=x+r$,
because the graphs of $t\to \frac{t}{1-t}$ and $t\to t+r$ must intersect in $(0,1)$.
We define a map:

\[
t\cdot \gamma_r=
\begin{cases}
 t&\text{ if }t\leq 0\\
 \frac{t}{1-t}&\text{ if }0\leq t\leq x\\
 t+r&\text{ if }x\leq t\\
\end{cases}
\qquad
\]

Now the matrices associated to the maps $t\to t+r, \frac{t}{1-t}$ are
\[ \left( \begin{array}{cc}
1 & r \\
0 & 1  \end{array} \right),
\left( \begin{array}{cc}
1 & 0 \\
-1 & 1  \end{array} \right)\]
respectively.
It follows that $x$ is fixed by
\[ \left( \begin{array}{cc}
1 & r \\
0 & 1  \end{array} \right)\cdot
\left( \begin{array}{cc}
1 & 0 \\
-1 & 1  \end{array} \right)^{-1}\]
which is a hyperbolic matrix.
Therefore $x\in P_A$,
and hence $\gamma_r\in H(A)$.

Now given any $n\in \mathbb{Z}$, $r\in A, r>0$,
we define the map:

\[
t\cdot \gamma_{n,r}=
\begin{cases}
 t&\text{ if }t\leq n\\
 \frac{t-n}{1-(t-n)}+n&\text{ if }n\leq t\leq n+x\\
 t+r&\text{ if }n+x\leq t\\
\end{cases}
\qquad
\]

This map is obtained by conjugating $\gamma_r$ by
the map $t\to t+n$.
Clearly, $\gamma_{n,r}\in H(A)$.
\end{defn}

\begin{defn}
Let $r\in A$ be a negative real number.
The graph of the map $t\to t+r$ meets the map $\frac{t}{1+t}$
at some number $x$ contained in the interval $[-r, -r+1]$.
We define the map

\[
t\cdot \lambda_r=
\begin{cases}
 t&\text{ if }t\leq 0\\
 \frac{t}{1+t}&\text{ if }0\leq t\leq x\\
 t+r&\text{ if }x\leq t\\
\end{cases}
\qquad
\]

Just as in the previous definition, one checks that $x\in P_A$ and so $\lambda_r\in H(A)$.
For $n\in \mathbb{Z}$, the map $\lambda_{n,r}$ is obtained by conjugating $\lambda_r$ by $t\to t+n$.

\[
t\cdot \lambda_{n,r}=
\begin{cases}
 t&\text{ if }t\leq n\\
 \frac{t-n}{1+(t-n)}+n&\text{ if }n\leq t\leq n+x\\
 t+r&\text{ if }n+x\leq t\\
\end{cases}
\qquad
\]

It follows that $\lambda_{n,r}\in H(A)$.
\end{defn}

The idea of the construction of these elements is to provide ``bump" or ``step" functions
between the identity and $t+r$, always within $H(A)$.
Our maps in $H(A)$ are translations $t+r$ near infinity, so these functions will be used to provide steps to the identity which will transform them into compactly supported maps. See Figure \ref{gammas}.

\begin{figure}[t]
\centerline{\includegraphics [width=130mm]{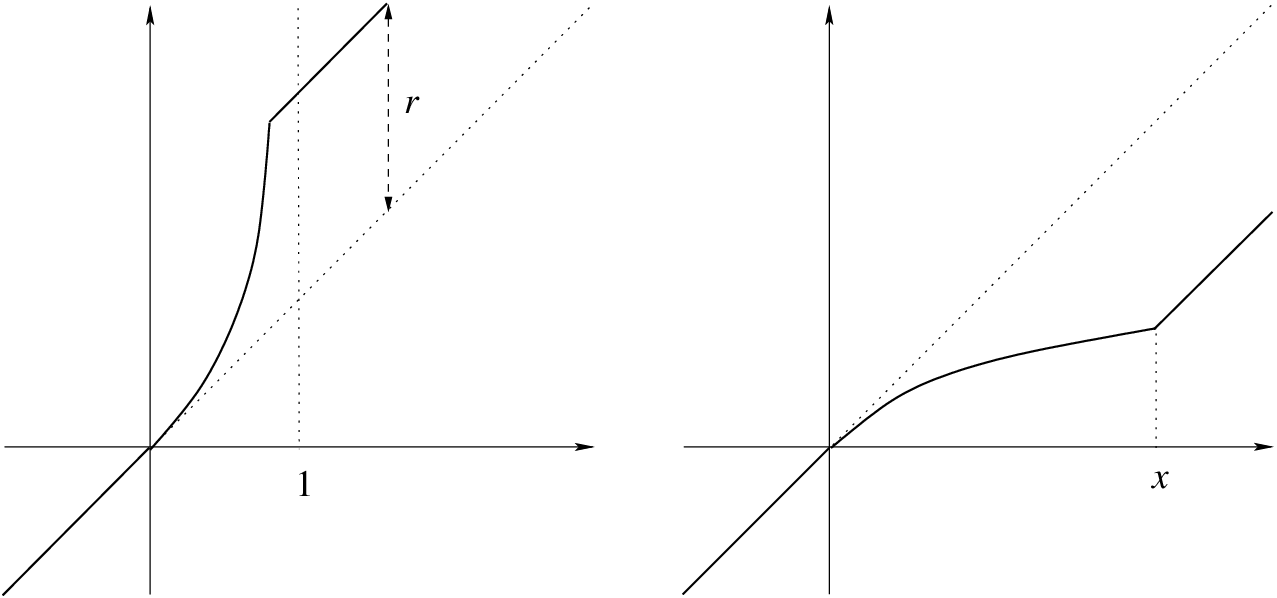}}
\caption{The maps $\gamma_r$ and $\lambda_r$.}
\label{gammas}
\end{figure}

Stated in general, the generic elements constructed above allow us to observe the following.

\begin{lem}\label{generic1}
For each $r\in A$ and  $p\in \mathbb{R}$
there is a $f\in H(A)$ such that:
\begin{enumerate}
\item $f$ is supported on $[y,\infty)$ for some $y<p$.
\item The restriction of $f$ to $(p,\infty)$ equals addition by $r$.
\end{enumerate}
\end{lem}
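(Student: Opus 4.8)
The idea is to assemble $f$ from the translation-by-$r$ bumps $\gamma_{n,r}$ and $\lambda_{n,r}$ constructed above, choosing the base integer $n$ far enough to the right so that the ``ramp'' portion of the bump sits entirely beyond $p$. Concretely, suppose first $r>0$. Pick an integer $n>p$. The map $\gamma_{n,r}$ is the identity on $(-\infty,n]$, hence in particular on an interval $(-\infty,y]$ with $y<p$ — wait, that is backwards: $\gamma_{n,r}$ is supported on $[n,\infty)$, but it is \emph{not} equal to addition by $r$ on all of $(p,\infty)$, only on $[n+x,\infty)$. So the first attempt needs a small fix: I would instead want the bump to have already ``switched on'' by the time we reach $p$.

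So here is the corrected approach. Given $p$, choose an integer $n$ with $n < p$. For $r>0$, the map $\gamma_{n,r}$ equals $t\mapsto t$ for $t\le n$ (so it is supported on $[n,\infty)\subseteq[y,\infty)$ with $y=n<p$, giving (1)) and equals $t\mapsto t+r$ for $t\ge n+x$; the only trouble is the transitional interval $[n,n+x]$, part of which may lie above $p$. To kill this, observe that $x\in(0,1)$, so the transition is confined to $[n,n+1]$. I would therefore choose $n\le p-1$, i.e. $n=\lfloor p\rfloor-1$, so that $n+x < n+1 \le p$, and then on $(p,\infty)\subseteq[n+x,\infty)$ the map $\gamma_{n,r}$ is exactly addition by $r$, which is (2). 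For $r<0$ the same argument works with $\lambda_{n,r}$ in place of $\gamma_{n,r}$: it too is the identity on $(-\infty,n]$ and equals $t\mapsto t+r$ on $[n+x,\infty)$, and one again picks $n$ small enough that the transition region $[n,n+x]$ lies below $p$ — here $x\in[-r,-r+1]$, so one needs $n+(-r+1)\le p$, i.e. $n\le p+r-1$; set $n=\lfloor p+r\rfloor-1$. For $r=0$ take $f=1$. In all three cases $f\in H(A)$ by the definitions preceding the lemma.

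The verification that (1) and (2) hold is then just reading off the piecewise formulas for $\gamma_{n,r}$ and $\lambda_{n,r}$, using only the bound $x<1$ in the first case and $x\le -r+1$ in the second. The one point requiring a line of care is the claim that the transition interval is short enough to be pushed below $p$ while keeping $n$ an integer (so that conjugation by $t\mapsto t+n$ stays inside $H(A)$); this is where the explicit bounds on $x$ from the two definitions are used, and it is the only real content of the argument. Everything else is bookkeeping, so I would state the lemma's proof in a sentence or two: for $r>0$ take $f=\gamma_{n,r}$ with $n=\lfloor p\rfloor-1$, for $r<0$ take $f=\lambda_{n,r}$ with $n=\lfloor p+r\rfloor-1$, and for $r=0$ take $f=1$, noting in each case $y=n<p$ works for (1) and the bound on the breakpoint $x$ gives (2).
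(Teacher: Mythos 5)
Your proof is correct and is exactly the argument the paper intends: the lemma is stated there without proof as an immediate consequence of the constructions of $\gamma_{n,r}$ and $\lambda_{n,r}$, and your choice of integer $n$ (using the bounds $x<1$ for $\gamma_r$ and $x\le -r+1$ for $\lambda_r$ to push the transition interval below $p$) is precisely the bookkeeping being left to the reader.
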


In an analogous fashion, one establishes the following.

\begin{lem}\label{generic2}
For each $r\in A$ and $p\in \mathbb{R}$
there is a $f\in H(A)$ such that:
\begin{enumerate}
\item $f$ is supported on $(-\infty, z]$ for some $z>p$.
\item The restriction of $f$ to $(-\infty,p)$ equals addition by $r$.
\end{enumerate}
\end{lem}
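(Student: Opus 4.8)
The plan is to mirror the proof of Lemma~\ref{generic1}. The one genuinely new point is that $PPSL_2(\mathbb{R})$ contains no orientation-reversing homeomorphism, so one cannot simply conjugate Lemma~\ref{generic1} by $t\mapsto -t$; instead a ``mirror-image'' step function must be built from the generic elements already constructed, and this is accomplished by composing one of them with a translation.

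If $r=0$ one takes $f$ to be the identity, which is the translation by $0\in A$ and hence lies in $H(A)$; it has empty support and its restriction to $(-\infty,p)$ is addition by $0$, so both conditions hold. Assume now $r\neq 0$, and write $\mu_{n,r}$ for $\gamma_{n,r}$ when $r>0$ and for $\lambda_{n,r}$ when $r<0$. In either case $\mu_{n,r}\in H(A)$, it is the identity on $(-\infty,n]$, and it agrees with the translation $\tau_r\colon t\mapsto t+r$ on $[n+x,\infty)$ for the associated positive real number $x$; consequently $\mu_{n,r}^{-1}$ is the identity on $(-\infty,n]$ and agrees with $t\mapsto t-r$ on $[n+x+r,\infty)$. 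Noting that $\tau_r\in H(A)$ because $r\in A$, I would put
\[
f=\tau_r\circ\mu_{n,r}^{-1}\in H(A).
\]
From the description of $\mu_{n,r}^{-1}$ one reads off that $f$ agrees with $\tau_r$ on $(-\infty,n]$ and is the identity on $[n+x+r,\infty)$. Finally, choosing any integer $n\geq p$, so that $(-\infty,p)\subseteq(-\infty,n]$, and setting $z:=n+x+r$, one gets $z\geq p+x+r>p$; thus $f$ is supported on $(-\infty,z]$ with $z>p$ and its restriction to $(-\infty,p)$ is addition by $r$, as required.

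The only obstacle is the routine bookkeeping about left-to-right order: one must check that the interval $[n,\,n+x+r]$ carrying the projective (non-affine) part of $f$ genuinely lies between the half-line $(-\infty,n]$ on which $f=\tau_r$ and the half-line $[n+x+r,\infty)$ on which $f$ is trivial, and likewise that $z=n+x+r>p$. Both reduce to the inequality $x>-r$, which is automatic when $r>0$ since $x>0$, and which when $r<0$ follows from the fact that the intersection point $x$ of the graphs of $t\mapsto t/(1+t)$ and $t\mapsto t+r$ lies in $[-r,-r+1]$ and cannot equal $-r$, since $\tfrac{-r}{1-r}\neq 0$.
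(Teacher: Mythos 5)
Your construction is correct. The paper gives no explicit proof of Lemma \ref{generic2} --- it only says the statement follows ``in an analogous fashion'' to Lemma \ref{generic1}, which would ordinarily mean building mirror-image step functions (translation by $r$ to the left of a point, identity to the right) from scratch out of projective pieces and re-checking that the new breakpoints lie in $P_A$. You instead obtain the mirror step function as $\tau_r\circ\gamma_{n,r}^{-1}$ (resp.\ $\tau_r\circ\lambda_{n,r}^{-1}$), which lies in $H(A)$ automatically because $H(A)$ is a group and $\tau_r\in H(A)$ for $r\in A$; this spares you any further verification of membership of breakpoints in $P_A$, at the cost of the bookkeeping you carry out explicitly. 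That bookkeeping is done correctly: $\mu_{n,r}^{-1}$ is indeed the identity on $(-\infty,n]$ and translation by $-r$ on $[n+x+r,\infty)$, so $f=\tau_r\circ\mu_{n,r}^{-1}$ equals $t\mapsto t+r$ on $(-\infty,n]$ and the identity on $[n+x+r,\infty)$; and the inequality $x+r>0$, which guarantees $z=n+x+r>p$ once $n\ge p$, is exactly the point that needs saying --- your argument that for $r<0$ the graphs of $t\mapsto t/(1+t)$ and $t\mapsto t+r$ cannot meet at $t=-r$ settles it. This is a legitimate and arguably cleaner route to the same statement than rebuilding the bumps by hand.
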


We now provide an elementary \emph{gluing construction} that allows one to build piecewise projective maps,
by gluing pieces of piecewise projective maps provided they agree on a suitable interval. If two maps agree in an interval,
we can take a hybrid of the two which consists of one on one side, and another in the other side. See Figure \ref{gluingpic}.

\begin{figure}[t]
\centerline{\includegraphics [width=130mm]{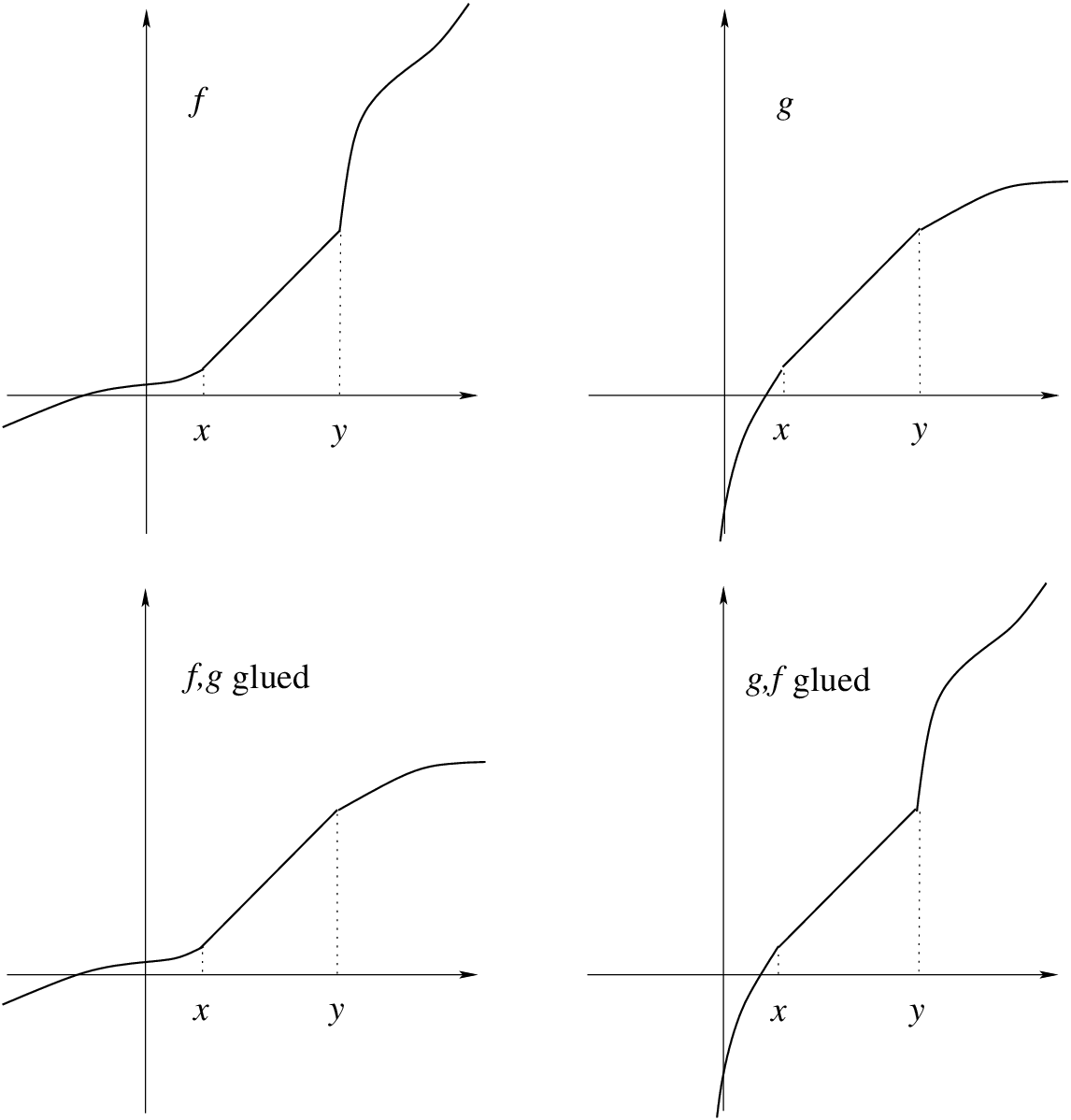}}
\caption{An example of gluing.}
\label{gluingpic}
\end{figure}

The proof of this lemma is straightforward.

\begin{lem}\label{gluing}
(Gluing) Let $f,g$ be an ordered pair of elements in $H(A)$ and let $I=[x,y]$ be such that
the restrictions of $f,g$ on $[x,y]$ agree.
Then there is an element $h\in H(A)$ such that:
\begin{enumerate}
\item  The restriction of $f,h$ on
$(-\infty, x]$ agree.
\item The restrictions of $g,h$ on $[y,\infty)$ agree.
\item The restriction of $h$ on $[x,y]$ agrees with the restrictions of both $f,g$ on $[x,y]$.
\end{enumerate}
\end{lem}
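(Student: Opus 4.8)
The plan is to define $h$ explicitly by the formula
\[
h(t)=\begin{cases} f(t) & \text{if } t\le x,\\ f(t)=g(t) & \text{if } x\le t\le y,\\ g(t) & \text{if } y\le t,\end{cases}
\]
and then check that this prescription is well-defined and that $h$ lies in $H(A)$. The value at the endpoints is unambiguous: at $t=x$ both the first and the middle line give $f(x)$, and at $t=y$ both the middle and the last line give $g(y)=f(y)$, since by hypothesis $f$ and $g$ agree on all of $[x,y]$. Conclusions (1), (2), (3) are then immediate from the definition.

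It remains to verify that $h$ is an element of $H(A)$, i.e.\ that it is an orientation-preserving piecewise projective homeomorphism of $\rr$, with pieces in $PSL_2(A)$ and breakpoints in $P_A$, and that it stabilizes $\infty$. First I would note that $h$ is a well-defined bijection of $\rr$: it is continuous (it agrees with the continuous map $f$ on $(-\infty,x]$, with the continuous map $g$ on $[y,\infty)$, and these match on the overlap), strictly increasing (being strictly increasing on each of the three closed intervals, which cover $\rr$ and overlap), and surjective onto $\rr$ (since $f$ is surjective and affine near $-\infty$ it carries $(-\infty,x]$ onto $(-\infty,f(x)]$, similarly $g$ carries $[y,\infty)$ onto $[g(y),\infty)=[f(y),\infty)$, and $f([x,y])=[f(x),f(y)]$ fills the gap). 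For the piecewise projective structure: the breakpoints of $h$ are among $\{x,y\}$ together with the breakpoints of $f$ lying in $(-\infty,x)$ and the breakpoints of $g$ lying in $(y,\infty)$; all of these lie in $P_A$, since $x,y\in P_A$ by hypothesis ($I=[x,y]$ is an interval with endpoints that are breakpoints, hence in $P_A$) and the remaining ones are breakpoints of $f,g\in H(A)$. On each resulting subinterval $h$ coincides with one of the finitely many projective pieces of $f$ or of $g$, each given by a matrix in $PSL_2(A)$. Finally, near $\pm\infty$ the map $h$ agrees with $g$ resp.\ $f$, which are affine and stabilize $\infty$; hence so does $h$. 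Therefore $h\in H(A)$.

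I do not expect any genuine obstacle here: this is exactly the ``straightforward'' verification the statement advertises. The only point requiring a moment's care is the matching at the two endpoints, and this is precisely why the hypothesis is that $f$ and $g$ agree on the whole closed interval $[x,y]$ rather than merely at a single point — agreement on a nondegenerate interval is what makes the glued map continuous and keeps the endpoints $x,y$ from becoming ``bad'' breakpoints (in fact if $f$ and $g$ happen to be given by the same projective piece on $[x,y]$, then $x$ and $y$ need not be breakpoints of $h$ at all, but even if they are, they lie in $P_A$). Once well-definedness is checked, membership in $H(A)$ reduces to the bookkeeping of pieces and breakpoints sketched above.
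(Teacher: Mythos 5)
Your proof is correct and is exactly the straightforward verification the paper intends (the paper omits the proof entirely, remarking only that it is straightforward). One small repair: ``$x,y\in P_A$ by hypothesis'' is not actually part of the hypothesis, but as your final parenthetical already indicates this does not matter --- since $f=g$ on the nondegenerate interval $[x,y]$, the glued map $h$ coincides with $f$ on a neighbourhood of $x$ and with $g$ on a neighbourhood of $y$, so every breakpoint of $h$ is already a breakpoint of $f$ or of $g$ and hence lies in $P_A$.
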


Note that in the gluing construction the order of the pair $f,g$ is essential in determining how the elements are glued, the resulting maps are different of we glue $f,g$ or if we glue $g,f$.
The interval $[x,y]$ in Lemma \ref{gluing} will be called the \emph{gluing interval}.
Now we are ready to prove the main lemma
for the case where $A$ has units other than $\pm 1$.

\begin{lem}\label{specialcomm1}
Let $g,h\in H(A)'$. Then $[g,h]\in H_c(A)'$.
\end{lem}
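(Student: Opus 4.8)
The element $[g,h]$ lies in $H_c(A)$: since $g,h\in H(A)'$ their affine germs at $\pm\infty$ have trivial linear part, hence are translations, and translations commute, so $[g,h]$ has trivial germs at both ends. We must upgrade ``$[g,h]\in H_c(A)$'' to ``$[g,h]\in H_c(A)'$''. Before starting I would conjugate $g$ and $h$ by a suitable power of the hyperbolic element $t\mapsto c^2t$ with $c\in A^\times$, $c\neq\pm1$ (available by hypothesis). This does not change whether $[g,h]\in H_c(A)'$, because $H_c(A)\trianglelefteq H(A)$ and $H_c(A)'$ is characteristic in it, so $H_c(A)'\trianglelefteq H(A)$ and is preserved by conjugation in $H(A)$; and it lets us assume that every translation occurring as a germ of $g$ or $h$ at $\pm\infty$ has length $0$ or length $>1$. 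The point of this normalisation is that the ``step'' maps $\gamma_r,\lambda_r$ of Lemmas \ref{generic1} and \ref{generic2} have transition zones of width $<1$; after the normalisation every translation length in sight exceeds the width of a transition zone, and this is what makes the support bookkeeping below clean.

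The main tool is a \emph{truncation principle}: for $v\in H(A)$ and a compact interval $K$, one can taper $v$ to the identity outside a slightly larger interval --- using $\gamma_r,\lambda_r$ and the Gluing Lemma \ref{gluing} --- to get $\tilde v\in H_c(A)$ agreeing with $v$ on $K$. Its key consequence is the following sublemma: \emph{if $u\in H_c(A)$ and $v\in H(A)$, then $[u,v]\in H_c(A)'$.} Indeed $[u,v]=u\cdot(vu^{-1}v^{-1})$, and $vu^{-1}v^{-1}=\tilde vu^{-1}\tilde v^{-1}$ as soon as $\tilde v$ agrees with $v$ on a neighbourhood of $\operatorname{supp}(u)$, whence $[u,v]=[u,\tilde v]\in H_c(A)'$.

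Now I would strip the germs off. Write $g=u_1\gamma_1$ and $h=u_2\gamma_2$, where $u_i\in H_c(A)$ and $\gamma_i\in H(A)$ is the identity on a large interval about $0$ and equals its germ translation near each of $\pm\infty$; concretely $\gamma_i=\gamma_i^{-}\gamma_i^{+}$ is a product of one-sided ``pure germ'' elements built from Lemmas \ref{generic1}, \ref{generic2}, and $u_i$ has trivial germs. Expanding with the identities $[ab,c]=a[b,c]a^{-1}\,[a,c]$ and $[a,bc]=[a,b]\,b[a,c]b^{-1}$, applying the sublemma to every factor that has a compactly supported entry ($[u_1,h]$, $[\gamma_1,u_2]$, and so on), and using the normality of $H_c(A)'$ in $H(A)$ to absorb the conjugating elements, one reduces to showing $[\gamma_1,\gamma_2]\in H_c(A)'$. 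Splitting each $\gamma_i$ into its two one-sided pieces --- chosen with supports placed so that $\gamma_1^{+}$ commutes with $\gamma_2^{-}$ and $\gamma_1^{-}$ with $\gamma_2^{+}$ --- reduces this to the two ``same-side'' commutators, and by the symmetric treatment at $-\infty$ the whole lemma comes down to:
\[
[\alpha,\beta]\in H_c(A)' \quad\text{for }\alpha,\beta\in H(A)\text{ equal to the identity near }-\infty\text{ and to translations near }+\infty .
\]

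This last statement is where the real work is, and the step I expect to be the main obstacle. Such $\alpha,\beta$ are translations (say by $p$ and $q$) near $+\infty$, so $[\alpha,\beta]$ is again compactly supported. Writing $\alpha=\alpha_0\tau$ and $\beta=\beta_0\sigma$ with $\alpha_0,\beta_0\in H_c(A)$ and $\tau,\sigma$ the pure step maps $\gamma_p,\gamma_q$ (or $\lambda_p,\lambda_q$, or identities if $p$ or $q$ is zero), the sublemma and the normality of $H_c(A)'$ reduce this to $[\tau,\sigma]\in H_c(A)'$. Here I would place the transition zone of $\sigma$ far to the right of that of $\tau$, at distance greater than $|p|$, which by the first paragraph exceeds the width of $\tau$'s zone. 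A direct inspection of the homeomorphisms then gives $[\tau,\sigma]=[\theta_p,\sigma]$, where $\theta_p\colon t\mapsto t+p$ is the pure translation (the two agree because $\tau$ already equals $\theta_p$ throughout the region swept out by the commutator computation). Finally, truncating $\sigma$ far to the right to $\hat\sigma\in H_c(A)$ with its tapering zone narrower than $|p|$ and $|q|$, the same kind of inspection yields $[\theta_p,\sigma]=[\theta_p,\hat\sigma]=[\hat\sigma,\theta_p]^{-1}$, which lies in $H_c(A)'$ by the sublemma. The genuinely technical part is carrying out these last two verifications --- the equalities $[\tau,\sigma]=[\theta_p,\sigma]$ and $[\theta_p,\sigma]=[\theta_p,\hat\sigma]$ as honest homeomorphisms of $\mathbb R$, not merely up to an error term --- which is exactly where the explicit geometry of $\gamma_r,\lambda_r$, the gluing construction, and the length normalisation of the first paragraph all get used.
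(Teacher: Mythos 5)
Your architecture is sound up to the last step, and your sublemma ($u\in H_c(A)$, $v\in H(A)$ $\Rightarrow$ $[u,v]\in H_c(A)'$, proved by replacing $v$ with a truncation $\tilde v$ agreeing with $v$ near $\overline{\operatorname{supp}(u)}$) is correct and is a clean repackaging of the paper's gluing computations. The reduction to $[\tau,\sigma]$ with $\tau,\sigma$ one-sided step maps, and the identity $[\tau,\sigma]=[\theta_p,\sigma]$ when the transition zones are far apart, also check out. But the final claimed equality $[\theta_p,\sigma]=[\theta_p,\hat\sigma]$ is false, and it is exactly the crux. Writing $[\theta_p,\hat\sigma]=\bigl({}^{\theta_p}\hat\sigma\bigr)\hat\sigma^{-1}$, this element is nontrivial near \emph{every} transition zone of $\hat\sigma$, and truncating $\sigma$ creates a second zone (where $\hat\sigma$ tapers from $t+q$ back to the identity). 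Concretely, if that taper occupies $[N,N+w]$ and $s\in[N,N+w]$ with $s-p$ below the taper, then $[\theta_p,\hat\sigma](\hat\sigma(s))=\hat\sigma(s-p)+p=s+q\neq\hat\sigma(s)$; no choice of $w$ relative to $|p|,|q|$ removes this. So $[\theta_p,\hat\sigma]$ differs from $[\theta_p,\sigma]$ by a nontrivial compactly supported element localized at the artificial taper --- which is itself another instance of the very ``one-sided commutator'' you are trying to control, so the reduction does not terminate.

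This is not a cosmetic slip: the class of $[\tau,\sigma]$ (both entries supported on a half-line) in $H_c(A)/H_c(A)'$ cannot be killed by truncation alone, because $\sigma$ has unequal germs at $+\infty$ and $-\infty$ and hence differs from every element of $H_c(A)$ by something that is not a global translation. Note also that your argument never uses the hypothesis that $A$ has units other than $\pm1$ except in the harmless normalisation of translation lengths, whereas the lemma is stated precisely in the units case --- a warning sign. The paper resolves exactly this point by a different device: it conjugates the half-line-supported commutator by the M\"obius map $M(t)=t/(1+t)$, squashing $[0,\infty)$ into $[0,1)$ to land in $H_c(A)'$, and then conjugates back by an element $m\in H(A)$ that agrees with $M^{-1}$ only on that compact image; the existence of $m$ (equivalently, the fact that $\infty\in P_A$, Lemma \ref{units}) is where the nontrivial units enter. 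Your proof would be complete if you replaced the false truncation identity with this conjugation argument (or an equivalent one) for $[\tau,\sigma]$; otherwise the rest of your reductions, while correct and somewhat tidier than the paper's step-by-step gluing, do not reach the target.
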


\begin{proof}
The main idea of the proof is to find elements $h_1,h_2, k_1,k_2\in H(A)$ such that:
\begin{enumerate}
\item $[h_1,h_2], [k_1,k_2]\in H_c(A)'$.
\item $[h_1,h_2] [g,h] [k_1,k_2]\in H_c(A)'$.
\end{enumerate}
This will finish the proof.
The construction of these elements will be done in four steps.

Step 1: The maps $f,g\in H(A)'$, they are translations near infinity.
This allows us to choose a sufficiently large interval $[r,s]$ for which
the restriction of each element of $\{f,g,f^{-1},g^{-1}\}$ to $(-\infty, r)$
and $(s,\infty)$ are translations. Outside of this interval, we will glue the step functions constructed above so our maps become compactly supported.

Step 2: Applying Lemma \ref{generic1},
we find elements $h_1,h_2$ such that:
\begin{enumerate}
\item $h_1,h_2$ are supported on an interval $[x,\infty)$ for $x<r$.
\item There is a real $x<x_1<r$ such that the restrictions of $h_1,f$
on $[x_1,r]$ agree.
\item There is a real $x<x_2<r$ such that the restrictions of $h_2,g$ on $[x_2,r]$
agree.
\item Let $j_1,j_2$ be the elements obtained by gluing $h_1,f$ and $h_2,g$ along $[x_1,r],[x_2,r]$
respectively. Then $[j_1,j_2]=[h_1,h_2][f,g]$.
\end{enumerate}
The final condition above is satisfied if the gluing intervals are sufficiently large.
Since translations commute,
as we apply the sequence of elements of the commutator $$j_1,j_2,j_1^{-1},j_2^{-1}$$
in that order, one by one, we notice that if the gluing interval is large enough, it contains a subinterval on which
each element acts like a translation, and hence the net result is the identity map.
On the right side of this piece, the commutator acts like $[f,g]$,
and on the left side it acts like $[h_1,h_2]$. See Figure \ref{h1h2}.

\begin{figure}[ph]
\vspace{20mm}
\centerline{\includegraphics [height=160mm]{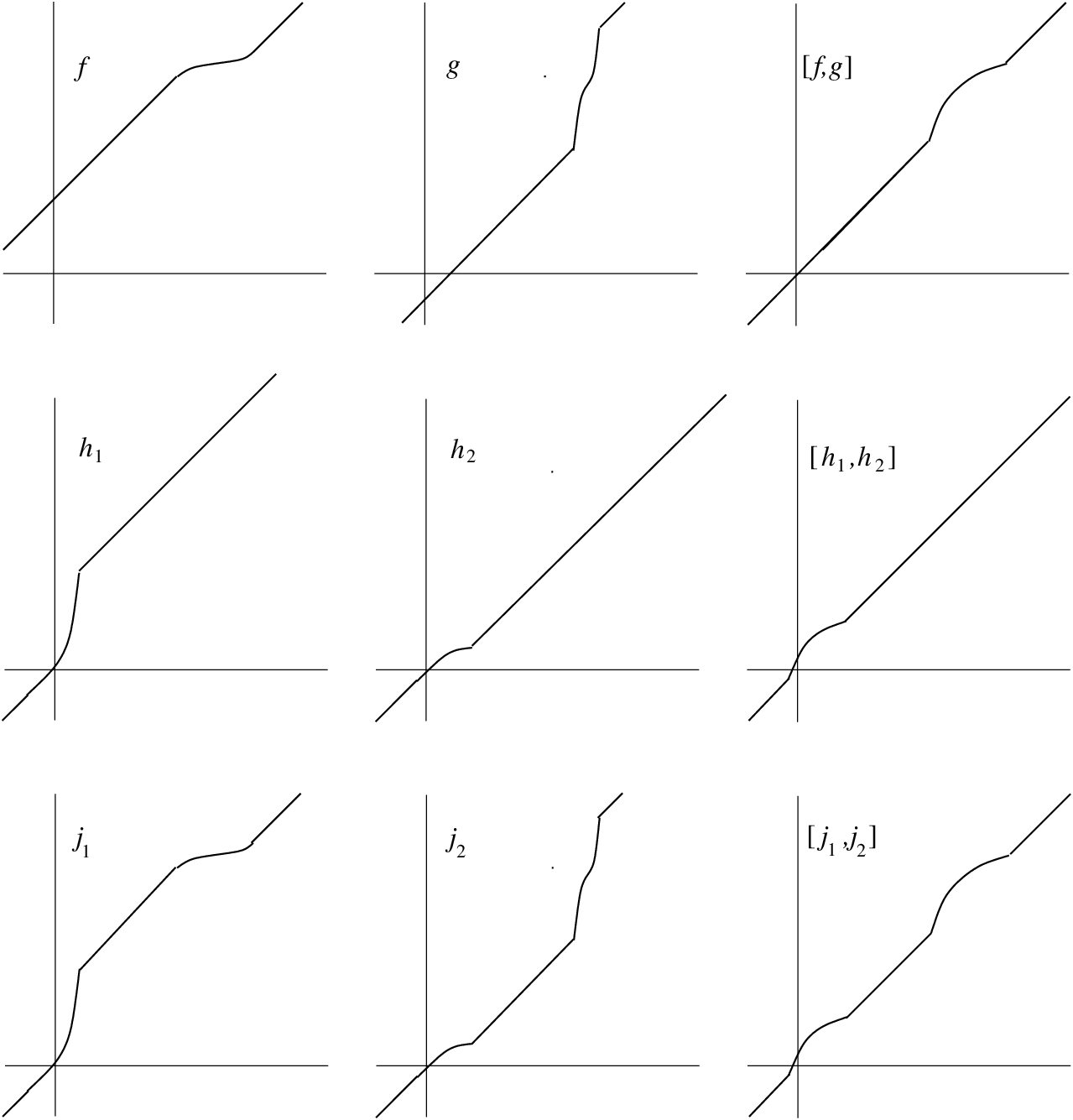}}
\caption{Step 2 in the proof that $[f,g]\in H_c(A)'$. The maps $f$ and $g$ do not have compact support, but they are glued to the maps $h_1$
and $h_2$ in such a way that the resulting maps $j_1$ and $j_2$ are the identity near $-\infty$ and their commutator $[j_1,j_2]$ agrees with $[f,g]$ except that $[h_1,h_2]$ has appeared.
The next step is to perform this procedure also near $+\infty$, to produce maps $f',g'$ which have compact support, and whose commutator agrees with $[f,g]$ except that $[h_1,h_2]$ and $[k_1,k_2]$
appear, one above, and one below $[f,g]$. The proof ends when these two latter commutators are also shown to be in $H_c(A)'$.}
\label{h1h2}
\end{figure}

Step 3: In this step we will do the same procedure as in step 2, but now on the right hand side of the maps. Applying Lemma \ref{generic2},
we find elements $k_1,k_2$ such that:
\begin{enumerate}
\item $k_1,k_2$ are supported on an interval $(-\infty,y)$.
\item There is a real $s<x_1<y$ such that the restrictions of $k_1,f$
on $[s,x_1]$ agree.
\item There is a real $s<x_2<y$ such that the restrictions of $k_2,g$ on $[s,x_2]$
agree.
\item Let $l_1,l_2$ be the elements obtained by gluing $f,k_1$ and $g,k_2$ along $[s,x_1],[s,x_2]$
respectively. Then $[l_1,l_2]=[f,g][k_1,k_2]$.
\end{enumerate}

Step 4: We glue $j_1,l_1$ along $[r,s]$
to obtain $f'$, and we glue $j_2,l_2$ along $[r,s]$ to obtain $g'$.
It follows that $[f',g']=[h_1,h_2][f,g][k_1,k_2]$,
and since $f'$ and $g'$ are constructed to be in $H_c(A)$, we conclude that $[f',g']\in H_c(A)'$.

This finishes the proof of the fact that $[h_1,h_2][f,g][k_1,k_2]\in H_c(A)'$.
To prove our lemma it suffices to show that $[h_1,h_2], [k_1,k_2]\in H_c(A)'$.
We will show this for $[h_1,h_2]$. The other case is completely analogous.
We assume that $h_1,h_2$ are supported on $[0,\infty)$ for simplicity.
(If this is not the case, we can conjugate the elements $h_1,h_2$ by a sufficiently large integer translation $p$,
and then establish that $[h_1^p, h_2^p]=[h_1,h_2]^p\in H_c'$.)

The map $t\cdot M=\frac{t}{1+t}$ fixes $0$ and maps $(0,\infty)$ to $(0,1)$.
So the map $$h_3=[M^{-1}h_1 M, M^{-1} h_2 M]=M^{-1} [h_1,h_2] M$$ is clearly in $H_c(A)'$.
In fact, the closure of the support of $h_3$ is contained in an interval $[0,t]\subset [0,1)$.

Now we will construct an element $m\in H(A)$ such that $m$ agrees with $M^{-1}$ on the support of $h_3$.
For a sufficiently large $k\in \mathbb{N}$, $\exists x\in (t,1)$ such that
$x+k=x\cdot M^{-1}=\frac{x}{1-x}$.
It follows that $x$ is fixed by
\[ \left( \begin{array}{cc}
1 & k \\
0 & 1  \end{array} \right)\cdot
\left( \begin{array}{cc}
1 & 0 \\
-1 & 1  \end{array} \right)^{-1}\]
which is a hyperbolic matrix.
So we define $m$
as:

\[
t\cdot m=
\begin{cases}
 t&\text{ if }t\leq 0\\
 \frac{t}{1-t}&\text{ if }0\leq t\leq x\\
 t+k&\text{ if }x\leq t\\
\end{cases}
\qquad
\]

The breakpoints of $m$ are $0,x,\infty$.
This means that $m\in H(A)$.
We remark that this is
the part of the argument where the existence of units in $A$ other than
$\pm 1$ is used.

However this means that $m^{-1} h_3 m= [h_1,h_2]$.
Since $H_c(A)'$ is characteristic in $H_c(A)$, it is invariant under conjugation by elements of $H(A)$.
Since $h_3\in H_c(A)'$ it follows that $[h_1,h_2]\in H_c(A)'$ as desired.
\end{proof}

We conclude the following.

\begin{cor}
$H(A)''$ is simple.
\end{cor}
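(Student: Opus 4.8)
The plan is to deduce simplicity of $H(A)''$ by identifying it with the group $H_c(A)'=H(A)'''$, which has already been shown to be simple. The key input is Lemma~\ref{specialcomm1}: every commutator $[g,h]$ with $g,h\in H(A)'$ lies in $H_c(A)'$.

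First I would observe that $H(A)''=[H(A)',H(A)']$ is by definition the subgroup generated by all commutators $[g,h]$ with $g,h\in H(A)'$. By Lemma~\ref{specialcomm1} each such generator already lies in the subgroup $H_c(A)'$, so $H(A)''\subseteq H_c(A)'$. On the other hand, the reverse-type inclusion $H(A)'''\subseteq H(A)''$ holds trivially, and we have recorded above that $H_c(A)'=H(A)'''$. Combining these, $H(A)'''\subseteq H(A)''\subseteq H_c(A)'=H(A)'''$, so all three groups coincide: $H(A)''=H(A)'''=H_c(A)'$.

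It then remains only to invoke the fact, established earlier via the variation of Corollary~\ref{HigmanCor} applied to $H_c(A)$ (using the high transitivity of the action of $PSL_2(\zz)$ on the line), that $H_c(A)'$ is simple. Since $H(A)''$ equals this group, $H(A)''$ is simple.

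The only genuine obstacle in this chain is Lemma~\ref{specialcomm1}, whose proof is the gluing argument carried out in Steps 1--4 above; once that lemma is in hand, the corollary is a formal two-line deduction, so there is no new difficulty to address here.
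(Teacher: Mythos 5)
Your proof is correct and follows exactly the route the paper intends: Lemma~\ref{specialcomm1} gives $H(A)''\subseteq H_c(A)'$, the previously established identity $H_c(A)'=H(A)'''$ gives the reverse containment, and simplicity of $H_c(A)'$ (via the Higman-type corollary) finishes the argument. There is no substantive difference from the paper's deduction.
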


The proof of Proposition \ref{centre} applies to both $H(A)$ and $H(A)'$,
so the center of these groups is trivial.
So we obtain the following.

\begin{prop}
Every proper quotient of $H(A)$ is metabelian.
\end{prop}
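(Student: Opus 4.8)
The plan is to follow exactly the template used for $G_0$: deduce that every proper quotient is abelian/metabelian from the simplicity of the relevant commutator subgroup together with triviality of the center. Here the relevant subgroup is $H(A)''$, which we have just shown to be simple, and the ambient group whose center we must understand is $H(A)$.

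First I would let $p\colon H(A)\to Q$ be a proper quotient map with kernel $K\neq\{1\}$, and pick a nontrivial element $k\in K$. Since the center of $H(A)$ is trivial — this follows from the remark after Proposition \ref{centre}, as $H(A)$ contains the integer translation $t\mapsto t+1$ and also non-translations — there is some $g\in H(A)$ with $[k,g]\neq 1$. This commutator lies in $K$ (as $K$ is normal) and also in $H(A)'$, hence $1\neq [k,g]\in K\cap H(A)'$. Iterating once more: $K\cap H(A)'$ is a normal subgroup of $H(A)'$, so if it is nontrivial then $K\cap H(A)''$ is a nontrivial normal subgroup of $H(A)'$, hence a nontrivial normal subgroup of $H(A)''$ as well. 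But $H(A)''$ is simple, so $K\cap H(A)''=H(A)''$, i.e. $H(A)''\subseteq K$. To run the iteration I need $K\cap H(A)'$ to contain an element not centralizing all of $H(A)'$; this again comes from the center of $H(A)'$ being trivial (same remark after Proposition \ref{centre}, since $H(A)'$ contains a translation — e.g. the commutator of $t\mapsto p^2 t$ with $t\mapsto t+1$, where $p$ is a unit $\neq\pm1$ — and also non-translations). So take $1\neq \ell\in K\cap H(A)'$, find $g'\in H(A)'$ with $[\ell,g']\neq 1$, and conclude $1\neq[\ell,g']\in K\cap H(A)''=$ (by simplicity) $H(A)''$, forcing $H(A)''\subseteq K$.

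Finally, from $H(A)''\subseteq K$ we get a surjection $H(A)/H(A)''\twoheadrightarrow H(A)/K=Q$, and $H(A)/H(A)''$ is metabelian since its commutator subgroup $H(A)'/H(A)''$ is abelian. A quotient of a metabelian group is metabelian, so $Q$ is metabelian.

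The only genuinely delicate point is the two-step descent through the derived series: one must be careful that at each stage the intersection of $K$ with the relevant derived subgroup is nontrivial, which is where triviality of the centers of $H(A)$ \emph{and} of $H(A)'$ is used. Both are handled uniformly by the general observation recorded after Proposition \ref{centre} that any group of piecewise projective homeomorphisms containing both a translation and a non-translation is centerless, and I would simply invoke that. Everything else is a routine application of normality and simplicity, so no lengthy computation is needed.
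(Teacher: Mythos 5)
Your argument is correct and is essentially the paper's own proof run in the contrapositive direction: both use the simplicity of $H(A)''$ together with the triviality of the centers of $H(A)$ and $H(A)'$ (via the remark after Proposition \ref{centre}) to force any nontrivial normal subgroup to contain $H(A)''$, whence the quotient factors through the metabelian group $H(A)/H(A)''$. No gaps; nothing further is needed.
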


\begin{proof}
Let $N$ be a normal subgroup of $H(A)$.
Let $N_1=H(A)'\cap N$ and $N_2=H(A)''\cap N$.
Since $H(A)''$ is simple, either $N_2$ is trivial or $N_2=H(A)''$.
In the former case it follows that $N_1$ is in the center of $H(A)'$,
which means that $N_1$ is trivial.
This means that $N$ is in the center of $H(A)$ which means that $N$ is trivial.
So indeed it follows that either $N$ is trivial or $N$ contains $H(A)''$.
\end{proof}

This finishes the proof of Theorem \mainA\ for the case where $A$ contains units other than $\pm1$.
Now we turn our attention to the case where $A$ does not contain units other than $\pm 1$.
This condition on the units forces the elements of $H(A)$ to be translations near infinity, since the only affine maps
in $PSL_2(A)$ are translations.
It follows that
elements of $H(A)'$ are compactly supported.
By applying Higman's theorem to $H(A)'$ and $H_c(A)$
we obtain that the groups $H(A)''$ and $H_c(A)'$ are simple, and as a consequence $H_c(A)'=H(A)''$.
To prove our claim it suffices to show that $H_c(A)'=H(A)'$.

Recall that because of Lemma \ref{units}, we now have that $\infty$ is not in $P_A$.
It follows that for any $f\in H(A)$,
the translations on both germs at $\infty$ are the same.
Moreover, $\mathbb{Q}\cap P_A=\emptyset$, since the orbit of $\infty$ under the action of $PSL_2(\mathbb{Z})$ is $\mathbb{Q}\cup \{\infty\}$.

We shall need an analogue of Lemmas \ref{generic1} and \ref{generic2} in which the resulting functions satisfy that their breakpoints that lie in $\mathbb{R}$ are elements of $P_A$.

Consider the hyperbolic matrix
\[ \left( \begin{array}{cc}
-2 & 1 \\
1 & -1  \end{array} \right)\]
and the corresponding fractional linear transformation
$t\to \frac{-2t+1}{t-1}$.
The fixed points $p<q$ of this map are elements of $P_{\mathbb{Z}}\subset P_A$
and the map tends to infinity as $t\to 1$.
Moreover, $p<0<q<1$.

Given any $r\in A, r>0$,
the curves of $t\to t+r$ and $t\to \frac{-2t+1}{t-1}$
meet in a real number $s\in (q,1)$.
Moreover, $s\in P_A$ since $s$ is a fixed point of the hyperbolic matrix
\[
 \left( \begin{array}{cc}
1 & -r \\
0 & 1  \end{array} \right)
\left( \begin{array}{cc}
-2 & 1 \\
1 & -1  \end{array} \right)
=
\left( \begin{array}{cc}
-2-r & 1+r \\
1 & -1  \end{array} \right)\]

We define the map:

\[
t\cdot \zeta_{r}=
\begin{cases}
 t&\text{ if }t\leq q\\
 \frac{-2t+1}{t-1}&\text{ if }q\leq t\leq s\\
 t+r&\text{ if }s\leq t\\
\end{cases}
\qquad
\]

Upon considering inverses and conjugation of this function with integer translations, as well as the analogous functions with non-trivial germs at $-\infty$, we obtain the following analogues of Lemmas \ref{generic1} and \ref{generic2}.

\begin{lem}\label{genericexception1}
For each $r\in A$ and  $p\in \mathbb{R}$
there is a map $f$ such that:
\begin{enumerate}
\item $f$ is a piecewise $PSL_2(A)$ homeomorphism of $\mathbb{R}$.
\item $f$ is supported on $[y,\infty)$ for some $y<p$.
\item The restriction of $f$ to $(p,\infty)$ equals addition by $r$.
\item The breakpoints of $f$, besides $\infty$, all lie in $P_A$.
\end{enumerate}
\end{lem}

\begin{lem}\label{genericexception2}
For each $r\in A$ and $p\in \mathbb{R}$
there is a map $f$ such that:
\begin{enumerate}
\item $f$ is a piecewise $PSL_2(A)$ homeomorphism of $\mathbb{R}$.
\item $f$ is supported on $(-\infty, z]$ for some $z>p$.
\item The restriction of $f$ to $(-\infty,p)$ equals addition by $r$.
\item The breakpoints of $f$, besides $\infty$, all lie in $P_A$.
\end{enumerate}
\end{lem}

Now we are ready to prove the main Lemma.

\begin{lem}\label{specialcomm2}
Let $A$ be a subring of $\mathbb{R}$ whose only units are $\pm 1$.
Let $f,g\in H(A)$. Then $[f,g]\in H_c(A)'$.
\end{lem}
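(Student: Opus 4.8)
We want to show that for $f,g\in H(A)$ (with $A$ having only $\pm 1$ as units), the commutator $[f,g]$ lies in $H_c(A)'$. The key structural fact we exploit is that, because $\infty\notin P_A$, every element of $H(A)$ is a \emph{single} translation $t\mapsto t+n$ near both $+\infty$ and $-\infty$, and moreover these two translation amounts coincide. Write $\tau(f)\in\mathbb{R}$ for this common translation number; then $\tau\colon H(A)\to\mathbb{R}$ is a homomorphism, and $f\in H_c(A)$ precisely when $\tau(f)=0$. Since $[f,g]$ already has $\tau([f,g])=0$, it is automatically in $H_c(A)$; the content of the lemma is that it is in fact a \emph{commutator of compactly supported elements}, i.e.\ in $H_c(A)'$ rather than merely in $H_c(A)$.

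\textbf{Main steps.} First, choose a large interval $[r,s]$ outside of which each of $f,g,f^{-1},g^{-1}$ is a (single) translation — the same translation at $-\infty$ and $+\infty$. Second, following the gluing strategy of Lemma~\ref{specialcomm1}, truncate $f$ and $g$ to compactly supported maps: using Lemma~\ref{generic1} produce $h_1,h_2\in H(A)$ supported on $[x,\infty)$ which agree, on a suitable subinterval to the left of $r$, with $f$ and $g$ respectively (this is where we need the translation amounts on the relevant germ to be realizable as steps in $H(A)$, which Lemma~\ref{generic1} guarantees for any $r\in A$); similarly use Lemma~\ref{generic2} to produce $k_1,k_2$ supported on $(-\infty,y)$ agreeing with $f,g$ to the right of $s$. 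Third, glue (Lemma~\ref{gluing}) to obtain $f',g'\in H_c(A)$ with $[f',g']=[h_1,h_2]\,[f,g]\,[k_1,k_2]$, exactly as in Steps 2--4 of the proof of Lemma~\ref{specialcomm1}; hence $[h_1,h_2][f,g][k_1,k_2]\in H_c(A)'$. Fourth, it remains to show $[h_1,h_2]\in H_c(A)'$ and $[k_1,k_2]\in H_c(A)'$, after which $[f,g]=[h_1,h_2]^{-1}\,[f',g']\,[k_1,k_2]^{-1}\in H_c(A)'$ (using that $H_c(A)'$ is a group).

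\textbf{The main obstacle.} The delicate point — and the reason this case is genuinely separate from the previous one — is precisely Step 4. In the proof of Lemma~\ref{specialcomm1}, the map $[h_1,h_2]$ supported on $[0,\infty)$ was conjugated into $[0,1)$ by $M(t)=t/(1+t)$ and then conjugated \emph{back} by an element $m\in H(A)$ that agreed with $M^{-1}$ on the relevant subinterval; but the construction of $m$ there used a breakpoint coming from a hyperbolic matrix whose existence relied on $A$ having a unit $\neq\pm 1$. Here we cannot do that. Instead I would argue as follows: $h_1,h_2$ are themselves compactly supported already on one side — wait, no: $h_1,h_2$ are supported on a half-line $[x,\infty)$, so $[h_1,h_2]$ need not be compactly supported. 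However, near $+\infty$ both $h_1$ and $h_2$ are, by construction (via Lemma~\ref{generic1}, property (2)), equal to the \emph{same} translation — namely the common germ of $f$, resp.\ $g$ — and more importantly we are free in Lemma~\ref{generic1} to arrange that $h_1,h_2$ be translations by amounts whose difference is controlled. The cleanest fix is to bootstrap: rather than proving $[h_1,h_2]\in H_c(A)'$ directly, observe that $h_1,h_2$ have \emph{strictly smaller} non-compactness than $f,g$ in the sense that they are the identity on a half-line $(-\infty,x)$; one then reduces, by a symmetric truncation on the $+\infty$ side using Lemma~\ref{generic2} and the gluing lemma, to a commutator of two genuinely compactly supported maps, picking up further compactly-supported commutator factors that are handled by induction on (say) the number of breakpoints or the length of the non-compact tails. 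Since at each stage the ``correction'' commutators $[h_i',h_i'']$ have strictly smaller support-complexity, the induction terminates, and the base case — both maps compactly supported — is trivial. I expect that making this induction precise (choosing the right complexity measure so it strictly decreases) is the only real work; everything else is the gluing bookkeeping already rehearsed in Lemma~\ref{specialcomm1}.
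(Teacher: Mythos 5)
Your Steps 1--3 reproduce the paper's argument, and you are right that the entire difficulty is concentrated in showing $[h_1,h_2],[k_1,k_2]\in H_c(A)'$, the conjugation by the element $m$ of Lemma \ref{specialcomm1} being unavailable without nontrivial units. However, your proposed resolution of that step has a genuine gap: the induction you sketch does not terminate. If you truncate $h_1,h_2$ (supported on a half-line $[x,\infty)$ and equal to translations near $+\infty$) on the right via Lemma \ref{generic2} and glue, you obtain $[h_1,h_2]=[h_1'',h_2''][k_1',k_2']^{-1}$ with $[h_1'',h_2'']\in H_c(A)'$, but the new correction pair $k_1',k_2'$ is supported on a half-line $(-\infty,y')$ and equal to translations by $c_1,c_2$ near $-\infty$ --- an object of exactly the same type you started with, merely mirrored. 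No complexity measure decreases: the step functions produced by Lemmas \ref{generic1} and \ref{generic2} always carry a genuinely non-compact translation tail (by the fixed, generally nonzero amounts $c_1,c_2$), and their breakpoint count does not tend to zero under further truncation. The reduction is circular and there is no base case.

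The missing idea is the one place where the hypothesis on units does work beyond putting $[f,g]$ in $H_c(A)$. Since $\infty\notin P_A$, the germs of $f$ at $-\infty$ and at $+\infty$ are the \emph{same} translation $t\mapsto t+c_1$. Consequently $h_1$ (which equals that translation on $[x_1,\infty)$, in particular on $[r,s]$) and $k_1$ (which equals that same translation on $(-\infty,x_1']$, in particular on $[r,s]$) agree on $[r,s]$, so they can be glued into a single \emph{compactly supported} element $t_1\in H_c(A)$; likewise $t_2$ from $h_2,k_2$. As the supports of $[h_1,h_2]$ and $[k_1,k_2]$ are disjoint, $[t_1,t_2]=[h_1,h_2][k_1,k_2]\in H_c(A)'$, which combined with $[s_1,s_2]=[f,g][h_1,h_2][k_1,k_2]\in H_c(A)'$ yields $[f,g]\in H_c(A)'$ with no induction at all. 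In short, the two half-line corrections must be paired with \emph{each other} (the left correction of $f$ glued to the right correction of $f$), not disposed of separately.
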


\begin{proof}
The proof will follow analogous lines to the proof of Lemma \ref{specialcomm1}.
We assume for the rest of the proof that the translation near infinity for $f$ is $t\to t+c_1$ and for $g$ is $t\to t+c_2$.
The main idea of the proof is to find piecewise $PSL_2(A)$ elements $h_1,h_2, k_1,k_2$ which are step functions which will transform $f$ and $g$ into compactly supported versions of themselves by stepping down the translations $t+c_i$ to the identity. Precisely, the elements $h_1,h_2, k_1,k_2$ will satisfy
\begin{enumerate}
\item $[h_1,h_2][k_1,k_2]\in H_c(A)'$
\item $[f,g][h_1,h_2] [k_1,k_2]\in H_c(A)'$
\end{enumerate}
and this will finish the proof.
We shall follow a five step procedure to construct the required elements.

Step 1: Choose a sufficiently large interval $[r,s]$ so that
the restriction of each element of $\{f,g,f^{-1},g^{-1}\}$ to $\mathbb{R}\setminus [r,s]$
is a translation. That is, all the nontranslation action for $f$ and $g$ happens well inside $[r,s]$.

Step 2: The idea now is to construct the elements $h_1,h_2$ to provide the stepping down to the identity right outside $[r,s]$, or more precisely, to the left of $r$. Applying Lemma \ref{genericexception1},
we find elements $h_1,h_2$ such that:
\begin{enumerate}
\item $h_1,h_2$ are supported on an interval $[x,\infty)$.
\item There is a real number $x_1$, with $x<x_1<r$, such that the restrictions of $h_1,f$
on $[x_1,r]$ agree.
\item There is a real number $x_2$, with $x<x_2<r$, such that the restrictions of $h_2,g$ on $[x_2,r]$
agree.
\item Let $j_1,j_2$ be the elements obtained by gluing $h_1,f$ and $h_2,g$ along $[x_1,r],[x_2,r]$
respectively. Then $[j_1,j_2]=[h_1,h_2][f,g]$.
\end{enumerate}
The last condition above is satisfied if the gluing intervals are sufficiently large,
just as in the analogous case in the proof of Lemma \ref{specialcomm1}. Observe that the elements $h_1, h_2$ are not in $H(A)$, because they have a breakpoint at $\infty$. The germ at $-\infty$ is the identity whereas the germ at $+\infty$ is a translation by $c_1$ or $c_2$.

Step 3:  Analogously to the previous step, applying Lemma \ref{genericexception2},
we find elements $k_1,k_2$ which bring the germ at $+\infty$ down to the identity. That is:
\begin{enumerate}
\item $k_1,k_2$ are supported on an interval $(-\infty,y)$.
\item There is a real number $y_1$, with $s<y_1<y$, such that the restrictions of $k_1,f$
on $[s,y_1]$ agree.
\item There is a real number $y_2$, with $s<y_2<y$, such that the restrictions of $k_2,g$ on $[s,y_2]$
agree.
\item Let $l_1,l_2$ be the elements obtained by gluing $f,k_1$ and $g,k_2$ along $[s,y_1],[s,y_2]$
respectively, then $[l_1,l_2]=[f,g][k_1,k_2]$.
\end{enumerate}

At this point we remark that by construction, the restriction of
the maps $h_1,k_1$ on $[r,s]$ equals translation by $c_1$,
and the restriction of
the maps $h_2,k_2$ on $[r,s]$ equals translation by $c_2$. All four maps will work as step functions, having a step outside $[r,s]$ to have the appropriate identity germ at $\infty$. As pointed out in step 2, these maps do not belong to $H(A)$.

Step 4: We glue $j_1,l_1$ along $[r,s]$ to obtain $s_1$,
and glue $j_2,l_2$ along $[r,s]$ to obtain $s_2$.

Step 5: We glue $h_1,k_1$ along $[r,s]$ to obtain $t_1$,
and glue $h_2,k_2$ along $[r,s]$ to obtain $t_2$.

We would like to emphasise that all four maps $s_1,s_2,t_1,t_2$ do belong to $H(A)$, and in fact, they belong to $H_c(A)$. This is because after gluing, for all four maps, both germs at $\infty$ are equal to the identity.

Finally, by construction, from the fact that the steps have been constructed far away from the nontranslation part of $f$ and $g$, it follows that $[s_1,s_2]=[h_1,h_2][f,g][k_1,k_2]$ and that this element belongs to $H_c(A)'$ because $s_1,s_2\in H_c(A)$.
Also by construction, same as before, we see that the supports of $[h_1,h_2], [f,g]$ are disjoint,
so we conclude that $$[s_1,s_2]=[h_1,h_2][f,g][k_1,k_2]=[f,g][h_1,h_2][k_1,k_2]\in H_c(A).$$
Since $$[t_1,t_2]=[h_1,h_2][k_1,k_2]\in H_c(A)'$$ because the maps $t_1,t_2$ are in $H_c(A)$, this finishes the proof.
\end{proof}

It follows from similar arguments, as in the case of $A$ with units other than $\pm 1$, that the center of $H(A)$ is trivial, and every proper quotient of $H(A)$ is abelian.
This concludes the proof of Theorem \mainA.

The groups $H(A)$ are not finitely presented, and a presentation for these groups would involve infinitely many maps similar to the map $y$ and their interactions. Writing down these generators and relations would be quite complicated. Hence this makes it difficult to compute the quotients of $H(A)$ by the commutators $H(A)'$ or $H(A)''$ to get its abelianization and metabelianization. We believe that, unlike the cases for $G$ and $G_0$, it is difficult to find easy expressions for these quotients.

\bibliographystyle{plain}
\bibliography{refs}

\end{document}